\pgfplotsset{compat=1.17}
\DeclareFontFamily{OMX}{MnSymbolE}{}
\DeclareSymbolFont{MnLargeSymbols}{OMX}{MnSymbolE}{m}{n}
\DeclareFontShape{OMX}{MnSymbolE}{m}{n}{
    <-6>  MnSymbolE5
   <6-7>  MnSymbolE6
   <7-8>  MnSymbolE7
   <8-9>  MnSymbolE8
   <9-10> MnSymbolE9
  <10-12> MnSymbolE10
  <12->   MnSymbolE12
}{}
\DeclareFontShape{OMX}{MnSymbolE}{b}{n}{
    <-6>  MnSymbolE-Bold5
   <6-7>  MnSymbolE-Bold6
   <7-8>  MnSymbolE-Bold7
   <8-9>  MnSymbolE-Bold8
   <9-10> MnSymbolE-Bold9
  <10-12> MnSymbolE-Bold10
  <12->   MnSymbolE-Bold12
}{}
\let\llangle\@undefined
\let\rrangle\@undefined
\DeclareMathDelimiter{\llangle}{\mathopen}%
                     {MnLargeSymbols}{'164}{MnLargeSymbols}{'164}
\DeclareMathDelimiter{\rrangle}{\mathclose}%
                     {MnLargeSymbols}{'171}{MnLargeSymbols}{'171}
\newtheorem{thm}{Theorem}[section]
\newtheorem{coro}[thm]{Corollary}
\newtheorem{defn}[thm]{Definition}
\newtheorem{prop}[thm]{Proposition}
\theoremstyle{remark}
\newtheorem{ex}[thm]{Example}
\newtheorem{rem}[thm]{Remark}
\newcommand{\K}{\mathbb{K}}
\newcommand{\R}{\mathbb{R}}
\newcommand{\C}{\mathbb{C}}
\renewcommand{\leq}{\leqslant} 
\renewcommand{\geq}{\geqslant}
\newcommand{\ch}{\mathcal{C}}
\DeclareMathOperator{\Cen}{Cen}
\DeclareMathOperator{\Der}{Der}
\definecolor{cerulean}{rgb}{0,.48,.65} 
\definecolor{magenta}{rgb}{.5,0,.5} 
\definecolor{dred}{rgb}{.5,0,0} 
\definecolor{green}{rgb}{0,.5,0} 
\definecolor{blue}{rgb}{0,0,0.5} 
\definecolor{black}{rgb}{0,0,0} 
\definecolor{dgreen}{rgb}{0,.3,0} 
\definecolor{vdred}{rgb}{.3,0,0} 
\definecolor{red}{rgb}{1,0,0} 
\definecolor{salmon}{rgb}{0.98,0.50,0.45} 
\definecolor{gray}{rgb}{.5,.5,.5} 
\definecolor{seagreen}{rgb}{0.13,0.70,0.67} 
\definecolor{chartreuse}{rgb}{0.40,0.80,0.00}
\definecolor{cornflower}{rgb}{0.39,0.58,0.93} 
\definecolor{gold}{rgb}{0.80,0.68,0.00}
\definecolor{cornellred}{rgb}{0.7, 0.11, 0.11} 
\definecolor{csugreen}{rgb}{0.12,0.31,0.17}
\definecolor{csugold}{rgb}{0.78,0.76,0.45}
\definecolor{bucknellorange}{rgb}{0.91,0.46,0.13}
\tikzset{pics/.cd,
cube/.style args={#1/#2/#3/#4}{code={
\coordinate (O) at (0,0,0);
\coordinate (A) at (0,#2,0);
\coordinate (B) at (0,#2,#3);
\coordinate (C) at (0,0,#3);
\coordinate (D) at (#1,0,0);
\coordinate (E) at (#1,#2,0);
\coordinate (F) at (#1,#2,#3);
\coordinate (G) at (#1,0,#3);
\draw[black,fill=black!80] (O) -- (C) -- (G) -- (D) -- cycle;
\draw[black,fill=black!30] (O) -- (A) -- (E) -- (D) -- cycle;
\draw[black,fill=black!10] (O) -- (A) -- (B) -- (C) -- cycle;
\draw[black,fill=black!20,opacity=0.8] (D) -- (E) -- (F) -- (G) -- cycle;
\draw[black,fill=black!20,opacity=0.6] (C) -- (B) -- (F) -- (G) -- cycle;
\draw[black,fill=black!20,opacity=0.8] (A) -- (B) -- (F) -- (E) -- cycle;
\node at (0.5*#1,0.5*#2,0.5*#3) {#4};
}}}
\tikzset{pics/.cd,
   tcube/.style args={#1/#2/#3/#4/#5}{
      code={
         \coordinate (O) at (0,0,0);
         \coordinate (A) at (0,#2,0);
         \coordinate (B) at (0,#2,#3);
         \coordinate (C) at (0,0,#3);
         \coordinate (D) at (#1,0,0);
         \coordinate (E) at (#1,#2,0);
         \coordinate (F) at (#1,#2,#3);
         \coordinate (G) at (#1,0,#3);
         \draw[black,fill=black!5,opacity=0.6] (O) -- (A) -- (B) -- (C)-- cycle;
         \draw[black,fill=black!5,opacity=1] (O) -- (A) -- (E) -- (D)-- cycle;
         \draw[black,fill=black!5,opacity=0.8] (O) -- (D) -- (G) -- (C)-- cycle;
         \node at (0.5*#1,0.6*#2,0.5*#3) {{\small #5}};

         \draw[dotted] (E) -- (F) -- (G);
         \draw[dotted] (B) -- (F);
      }
   }
}
\tikzset{pics/.cd,
ccube/.style args={#1/#2/#3/#4}{code={
\coordinate (O) at (0,0,0);
\coordinate (A) at (0,#2,0);
\coordinate (B) at (0,#2,#3);
\coordinate (C) at (0,0,#3);
\coordinate (D) at (#1,0,0);
\coordinate (E) at (#1,#2,0);
\coordinate (F) at (#1,#2,#3);
\coordinate (G) at (#1,0,#3);
\draw[black,fill=#4!80] (O) -- (C) -- (G) -- (D) -- cycle;
\draw[black,fill=#4!30] (O) -- (A) -- (E) -- (D) -- cycle;
\draw[black,fill=#4!10] (O) -- (A) -- (B) -- (C) -- cycle;
\draw[black,fill=#4!20,opacity=0.8] (D) -- (E) -- (F) -- (G) -- cycle;
\draw[black,fill=#4!20,opacity=0.6] (C) -- (B) -- (F) -- (G) -- cycle;
\draw[black,fill=#4!20,opacity=0.8] (A) -- (B) -- (F) -- (E) -- cycle;
}}}
\tikzset{pics/.cd,
linecube/.style args={#1/#2/#3}{code={
\coordinate (O) at (0,0,0);
\coordinate (A) at (0,#2,0);
\coordinate (B) at (0,#2,#3);
\coordinate (C) at (0,0,#3);
\coordinate (D) at (#1,0,0);
\coordinate (E) at (#1,#2,0);
\coordinate (F) at (#1,#2,#3);
\coordinate (G) at (#1,0,#3);
\draw[black] (O) -- (D) -- (E) -- (F) -- (B) -- (C) -- cycle;
\draw[dotted] (D) -- (G) -- (F);
\draw[dotted] (C) -- (G);
}}}
\tikzset{pics/.cd,
lwing/.style args={#1/#2/#3/#4}{code={
\coordinate (O) at ( 0, 0, 0);
\coordinate (A) at ( 0, 0,#3);
\coordinate (B) at ( 0,#2,#3);
\coordinate (C) at ( 0,#2, 0);
\draw[black,fill=red!20] (O) -- (A) -- (B) -- (C) -- cycle;
\node at (0,0.5*#2,0.5*#3) {#4};
}}}
\tikzset{pics/.cd,
mwing/.style args={#1/#2/#3/#4}{code={
\coordinate (O) at ( 0, 0, 0);
\coordinate (A) at (#1, 0, 0);
\coordinate (B) at (#1,#2, 0);
\coordinate (C) at ( 0,#2, 0);
\draw[black,fill=green!20] (O) -- (A) -- (B) -- (C) -- cycle;
\node at (0.5*#1,0.5*#2,0) {#4};
}}}
\tikzset{pics/.cd,
rwing/.style args={#1/#2/#3/#4}{code={
\coordinate (O) at ( 0, 0, 0);
\coordinate (A) at (#1, 0, 0);
\coordinate (B) at (#1, 0,#3);
\coordinate (C) at ( 0, 0,#3);
\draw[black,fill=blue!20] (O) -- (A) -- (B) -- (C) -- cycle;
\node at (0.5*#1,0,0.5*#3) {#4};
}}}
\pgfmathsetmacro{\xx}{0.5}
\tikzset{pics/.cd,
    nilcube/.style args={#1/#2}{
        code={
            \pic at (0,-#1,0) {ccube={#1/#1/#1/#2}};
                \pic at (#1,0,0) {ccube={#1/#1/#1/#2}};
            \pic at (0,0,#1) {ccube={#1/#1/#1/#2}};
            \pic at (0,#1,0) {linecube={2*#1/-2*#1/2*#1}};
        }
    }
}
\DeclareDocumentCommand \braket { o m m } {
        \IfNoValueTF {#3} {
            \left\langle #1\, \middle|\, #2 \right\rangle
        }{
            \left\langle #2\,\middle|_{#1}\, #3 \right\rangle
        }
    }
\newcommand{\hm}{\scalebox{0.5}[1.0]{$-$}}
\title{Detecting sparsity patterns in tensor data}
\author{Peter A. Brooksbank}
\address{Department of Mathematics, Bucknell University, Lewisburg, PA 17837, USA}
\email{pbrooksb@bucknell.edu}
\subjclass{20D45, 20D15, 08C05, 68Q65}
\author{Martin D. Kassabov}
\address{Department of Mathematics, 310 Malott Hall, Ithaca, NY 14853}
\email{kassabov@math.cornell.edu}
\author{James B. Wilson}
\address{Department of Mathematics, Colorado State University, Fort Collins, Colorado, 80523-1874, USA}
\email{James.Wilson@ColoState.Edu}
\date{\today}
\begin{document}

\begin{abstract}
    This article introduces a class of efficiently computable 
    \textit{sparsity patterns} for tensor data. The class includes familiar patterns such as
    block-diagonal decompositions explored in statistics and signal processing 
    \citelist{
        \cite{chatroom} \cite{CL:blocksnoise} \cite{CL:blockalgebra}\cite{CS:blocks}\cite{Cardoso}}, 
    low-rank tensor decompositions \cite{DeL:Blocks}, and Tucker decompositions \cite{Tucker}. 
    It also includes a new family of sparsity patterns---not known to be detectable 
    by current methods---that can be thought of as
    continuous decompositions approximating curves and surfaces.
    We present a general algorithm to detect sparsity patterns in each class using a parameter 
    we call a \textit{chisel} that tunes the search to patterns of a prescribed shape. 
    We also show that the patterns output by the algorithm 
    are essentially unique.
\end{abstract}

\maketitle


\section{Introduction}
\label{sec:intro}
This paper concerns the detection of \emph{sparsity
patterns} in tensor data. 
A \emph{tensor} is any data with an
interpretation $\gamma: U_{\hm m}^*\times \cdots \times
U_{\hm 1}^*\times U_1\times \cdots \times U_{\ell} \to U_0$ 
as a multilinear mapping of vector spaces
$U_0,\ldots, U_{\ell}$ and dual spaces 
$U_{\hm 1}^*,\ldots, U_{\hm m}^*$, collectively called \textit{modes}, 
into a space $U_0$ called the 
\textit{base}, cf.~\cite{Lim-tensors}*{Definition 3.3}.  

A \textit{sparsity pattern} for $\gamma$ exhibited by decompositions
\begin{align*}
    U_a & = X_{a1}\oplus X_{a2}\oplus \cdots \oplus X_{a k_a}, & -m\leq a\leq \ell,
\end{align*}
is a set $\Delta \subset \prod_a \{1,\ldots,k_a\}$ satisfying
\begin{align}
    \label{eq:sparsity-pattern}
    (i_{\hm m},\ldots,  i_{\ell}) \not\in \Delta
    & \quad \implies \quad 
    \gamma(X^*_{\hm m i_{\hm m}},\ldots, X^*_{\hm 1 i_{\hm \raisebox{1pt}{\tiny $1$}}},
    X_{1 i_1},\ldots, X_{\ell i_{\ell}})\subseteq X_{0i_0}.
\end{align}
Fixing a basis $\{e_{a,i_a} \mid 1\leq i_a\leq d_a\}$ for each $U_a$, one can
represent $\gamma$ as a multiway array $\Gamma$ with entries
\begin{align}
 \Gamma_{i_1\cdots i_{\ell}}^{i_{\hm \raisebox{1pt}{{\tiny $m$}}}\cdots i_{\hm \raisebox{1pt}{{\tiny $1$}}}i_0}
    := 
        e_{0 i_0}^{\top}\gamma (
            e^{\top}_{\hm m\, i_{\hm \raisebox{1pt}{{\tiny $m$}}}},\ldots, 
                            e^{\top}_{\hm 1\, i_{\hm \raisebox{1pt}{{\tiny $1$}}}} 
                            ,
                            e_{1 i_1},\ldots, e_{\ell i_{\ell}}),      
\end{align}
where $e\mapsto e^{\top}$ is the transpose map $U\to U^*$.
Tensors on three modes can be visualized, as in Figure~\ref{fig:main-block}, 
as point clouds in 3-dimensional space. 

\begin{figure}[!h]
    \begin{center}
        \vspace{-2.2ex}
        \hfill 
        \begin{subfigure}[h]{0.19\textwidth}
            \centering
            \includegraphics[height=2cm]{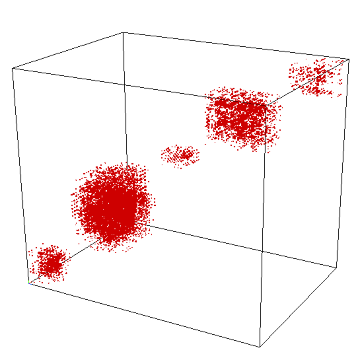}
            \caption{}
        \end{subfigure}
        \hfill 
        \begin{subfigure}[h]{0.19\textwidth}
            \centering
            \includegraphics[height=2cm]{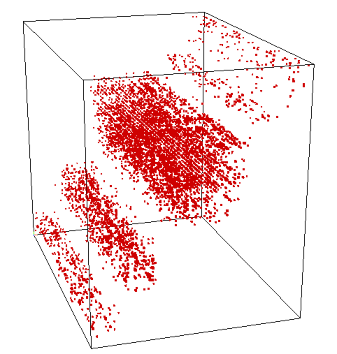}
            \caption{}
        \end{subfigure}
        \hfill
        \vspace{-2ex}
        \begin{subfigure}[h]{0.19\textwidth}
            \centering
            \includegraphics[height=2cm]{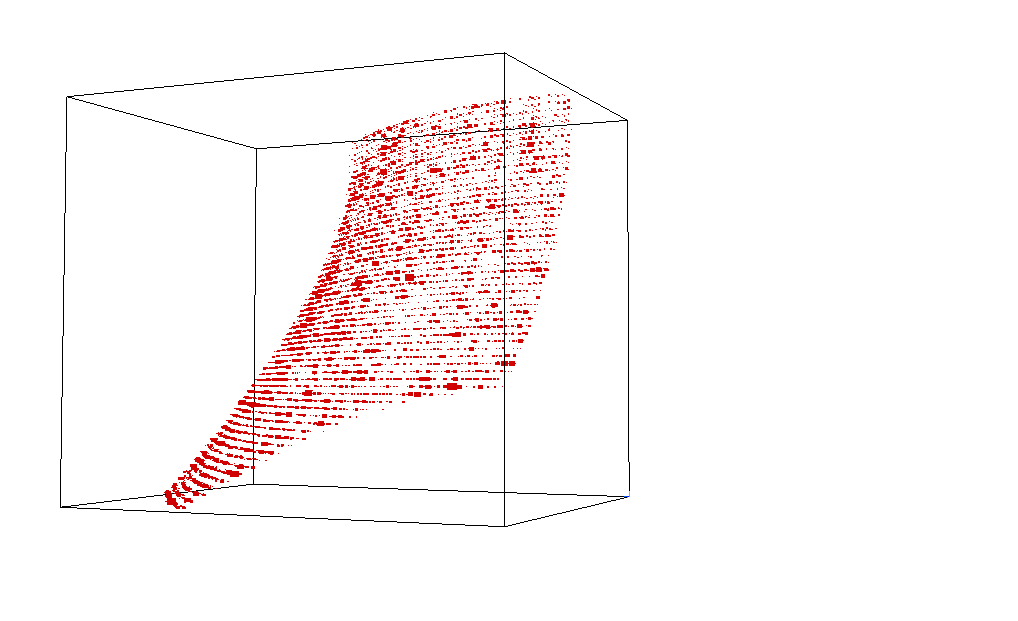}
            \caption{}
        \end{subfigure}
        \hfill
        \begin{subfigure}[h]{0.19\textwidth}
            \centering
            \includegraphics[height=2cm]{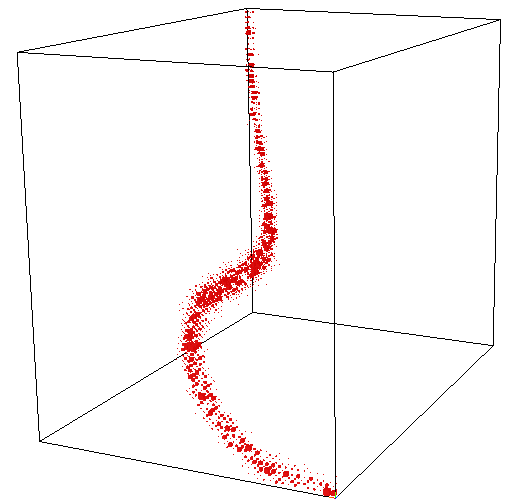}
            \caption{}
        \end{subfigure}
        \hfill
    \end{center}
    \caption{{\small Point clouds of tensor arrays recovered by our algorithms that exhibit 
    various types of sparsity patterns.}}
    \label{fig:main-block}
    \vspace{-2ex}
\end{figure}

Each of the plots reveals a sparsity pattern exhibited by some decompositions 
of the three modes. For instance, the point cloud in Figure~\ref{fig:main-block}(a) 
illustrates the sparsity pattern $\Delta=\{(i,i,i)\mid 1\leq i\leq 5\}$ 
exhibited by decompositions of each mode into five direct summands.
The surface and curve in Figure~\ref{fig:main-block}(c) 
and Figure~\ref{fig:main-block}(d), respectively, 
show sparsity patterns exhibited by more granular decompositions. 
\smallskip

It is important to emphasize that tensors are generally not
recorded in reference frames that naturally exhibit sparsity patterns, 
so the critical task is to find suitable basis changes 
for the modes and base of the tensor.
The images in Figure~\ref{fig:main-block} were generated by our 
algorithms, which found such basis changes in synthetically created and scrambled tensor data. 

\subsection*{Block decompositions}
Many problems in tensor research reduce to finding decompositions 
of the mode spaces that exhibit block structures of one form or 
another~\cite{Bader-Kolda}.   
For instance, block diagonal structures 
of the sort depicted in Figure~\ref{fig:main-block}(A)
are sought in blind source separation \citelist{\cite{Cardoso}}, 
where higher-order statistics such as moments or cumulants of independent 
events occur as distinct clusters on the diagonal. 
Figure~\ref{fig:main-block}(B) exemplifies
another common type of decomposition that focuses on 
a pair of modes and seeks decompositions along their common face. 
There are many applications of ``face block" decompositions, including 
polynomial and algebra 
factorization~\citelist{\cite{Eberly-Giesbrecht} \cite{Wilson:unique} \cite{Wilson:direct-decomp} \cite{Wilson:central}},
community detection \citelist{\cite{chatroom}\cite{chatroom2}}, simultaneous block
diagonalization of matrices \citelist{\cite{CL:blocksnoise}\cite{MM:star-alge-blocks}
\cite{CL:blockalgebra}\cite{CS:blocks}\cite{Wilson:unique}}, and low-rank
approximation problems \citelist{\cite{DeL:Blocks}}.
 
\subsection*{Continuous decompositions}
Our algorithms work for a broader class of computable sparsity patterns
that also includes the point clouds depicted 
in~Figure~\ref{fig:main-block}(C) and (D) in which the
data clusters around 
a surface, or intersection of surfaces. One can think of the corresponding 
tensor decompositions as continuous analogues of the more traditional block decompositions.

\subsection*{Features of our approach} 
Our approach is inspired by distributive products in 
the theory of (nonassociative) rings,
where Lie theory plays an essential role. In a sense, our results are the
outcomes of a study of ``regular semisimple elements of the Lie algebra
of derivations of a tensor''~\cite[Chapter IV]{Jac}. 
The result is a single algorithm that can be tuned by 
parameters to control the shape of the sparsity pattern 
sought, and also to account for additional symmetries that may be present.
Indeed, our approach unifies the outcomes of various existing 
tensor methods, such as Tucker decompositions~\cite{Tucker}, 
and some cases of Higher-Order Singular Value 
Decompositions~\citelist{\cite{DeL:Blocks}\cite{DeL:HOSVD}}. 
Details of how our methods can revover these decompositions are provided in 
Section~\ref{sec:tucker}, but we stress that in no way do we 
propose to compete with these specialized and highly 
optimized tensor techniques.

\subsection*{Implementation}
We implemented our algorithms in the Julia programming language~\cite{Julia}. 
Source code and documentation are available on GitHub~\cite{OpenDleto}, 
as well as interactive notebooks that illustrate 
the main features of our methods. Versions of 
experiments with synthetic data 
discussed throughout the paper can be reproduced 
using the notebooks. 
Our algorithms can uncover approximations to sparsity patterns,  
function reasonably well under a range of noisy conditions, 
and tolerate a number of outliers to a sparsity pattern. 
The notebooks themselves include options to incorporate a basic 
noise model into the experiment, and  Nick Vannieuwenhoven has 
recently studied the stability of variations of our algorithms 
under noise~\cite{Nick:Chisel}.

\subsection*{Outline of paper}
In Section~\ref{sec:multiarrays} we discuss the 
popular multiarray data structures 
for tensors and compare two natural tensor 
interpretations that use them. Section~\ref{sec:sparsity-patterns} 
shows how to visualize sparsity patterns in 
multiarrays using tensor contractions. 

In Section~\ref{sec:chisels},
we introduce chisels and define the class of sparsity patterns 
our methods are designed to detect. Section~\ref{sec:derivations} 
introduces \textit{derivations} as a tool to connect 
chisels and sparsity patterns to the underlying mode decompositions, 
and proves results (Theorems~\ref{thm:sparsity-to-der} and~\ref{thm:basic-chisel})
that ensure the correctness of our methods. 

In Section~\ref{sec:chiseling}, we 
present our `chiseling' algorithms: given 
a tensor and a chisel, we determine, constructively, whether there is a decomposition 
of the modes of the tensor that exhibits a sparsity pattern corresponding 
to the chisel. Section~\ref{sec:tools} takes a closer look at certain
chisels we have found useful in exploring tensors, 
and explains why they reveal the patterns for which they are designed. 
It also elucidates the connection between our methods and established 
tensor decomposition techniques.

The remainder of the paper contains an abridged theory of chisels and 
derivations needed to explain additional features of our algorithms.
Section~\ref{sec:theory} situates our approach within a general spectral theory 
of tensors~\cite{FMW}. We show, among other things, how our 
basic algorithm can be adapted 
to account for additional symmetries that may be present.
Section~\ref{sec:uniqueness} discusses the important issues of 
uniqueness and reproducibility. We show that 
algebraic properties of derivations of a `universal' chisel 
ensure that outputs of our algorithm over the complex 
field are unique. 


\section{Multiarrays}
\label{sec:multiarrays}
We saw in the introduction that array representations of tensors 
are very useful for visualization, and we shall continue to use them 
throughout. Thus, when we say ``$\Gamma$ is a tensor," we will 
usually be thinking that $\Gamma$ is an abstract array (in the sense 
of the Julia language, for instance), and we shall work with $\Gamma$ 
as a traditional multiarray. It is important, when 
using arrays, not to lose sight of the underlying 
multilinear interpretation. To emphasize this point,
we begin with two common tensor interpretations that use 
multiarrays. 

Given spaces $U_a=\mathbb{K}^{d_a}$ for $a\in [\ell]:=\{1,\ldots,\ell\}$, 
where each $u_a\in U_a$ is a function $[d_a]\to \mathbb{K}$
sending $i\mapsto u_{ai}$, the interpretation 
\begin{align}
    \label{def:tensor-product-interpretation}
    u_1\otimes \cdots \otimes u_{\ell} 
    & \quad := \quad (i_1,\ldots,i_{\ell})\mapsto u_{1i_1}\cdots u_{\ell i_{\ell}}. 
\end{align}
outputs $\ell$-arrays in $U_0=\mathbb{K}^{d_1\times \cdots \times d_{\ell}}$. 
Given $\Gamma\in \mathbb{K}^{d_1\times \cdots \times d_{\ell}}$, 
the interpretation
\begin{align}
    \label{def:multiarray-interpretation}
    \langle \Gamma \mid u_1,\ldots, u_{\ell} \rangle
    & \quad := \quad \sum_{i_1\in [d_1]}\cdots \sum_{i_{\ell}\in [d_{\ell}]}
        \Gamma_{i_1\cdots i_{\ell}} u_{1i_1}\cdots u_{\ell i_{\ell}}
\end{align}
outputs scalars in $U_0=\mathbb{K}$.
\smallskip

In the case $\ell=2$, $u_1\otimes u_2$ in
equation~\eqref{def:tensor-product-interpretation}
is a $(d_1\times d_2)$-matrix, while 
it is common to interpret
equation~\eqref{def:multiarray-interpretation} as 
$\langle u_1 \mid \Gamma \mid u_2 \rangle =\gamma(u_1,u_2)=\langle \Gamma\mid u_1^{\top},u_2\rangle$ 
for a $(d_1\times d_2)$-matrix $\Gamma$.
Figure~\ref{fig:viz-tensor} illustrates both forms of tensor interpretation 
for the case $\ell=3$, where
vectors (1-arrays) are drawn as colored ribbons, matrices (2-arrays) are rectangles, and 3-arrays are boxes.

\begin{figure}[!thbp]
    \begin{center}
        \includegraphics[width=0.9\textwidth]{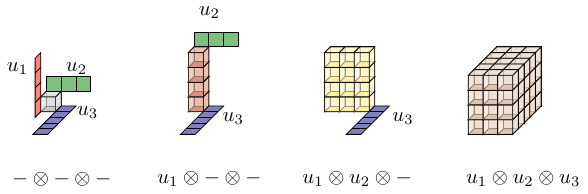}\\
        \includegraphics[width=0.9\textwidth]{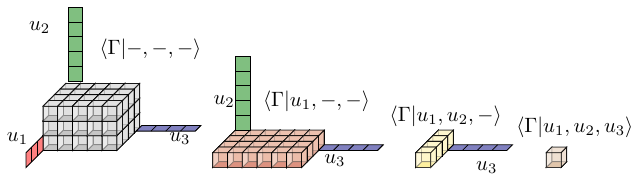}
    \end{center}
    \caption{Tensor interpretations visualized as sequences of partial evaluations. On the top 
    is the interpretation in equation~\eqref{def:tensor-product-interpretation} and on the bottom is  
    the interpretation in equation~\eqref{def:multiarray-interpretation}.}
    \label{fig:viz-tensor}
\end{figure}

\subsection{Partial evaluations of tensors}
In the same way that a matrix product can be computed as a sequence of binary products,
so multilinear maps can be computed in a sequence of partial evaluations.
Let $A=\{a_1<\cdots < a_{\ell}\}\subset [\ell]$ be a set
of modes, and let $u_a\in U_a$ for $a\in A$. 
A \textit{partial evaluation} at
$u_A=(u_a : a\in A)$, sometimes called a \textit{contraction}, 
is defined as follows:
\begin{align}
    \label{def:contract}
    \langle \Gamma \mid u_A\rangle := \langle \Gamma \mid -,u_{a_1},-\cdots-,u_{a_k},-\rangle.
\end{align}
Observe that partial evaluation produces a new tensor interpretation with modes $\{U_a : a\not\in A\}$. 
Figure~\ref{fig:viz-tensor} illustrates sequences of partial evaluations
for the multiarray interpretations in 
equations~\eqref{def:tensor-product-interpretation} and~\eqref{def:multiarray-interpretation}. 
Observe that partial evaluations reduce the valence of the tensor but 
not necessarily the size of the data structure.

\subsection{Notational convention}
\label{sec:convention}
For expedience and clarity we shall
henceforth work just with tensor interpretations like the one in 
equation~\eqref{def:multiarray-interpretation} with no 
dual spaces and a 1-dimensional base. When all vector 
spaces are finite-dimensional, one can always pass to an 
equivalent interpretation of this form. 

Specifically, given a multilinear function 
$\gamma: U_{\hm m}^*\times \cdots \times
U_{\hm 1}^*\times U_1\times \cdots \times U_{\ell} \to U_0$, 
we use $\Gamma\in\K^{d_{m+\ell}\times\cdots\times d_1\times d_0}$ 
with interpretation
\begin{align*}
\langle \Gamma\mid u_{\hm m}^{\top},\ldots,u_{\hm 1}^{\top},u_0^{\top},u_1,\ldots,u_{\ell}\rangle, 
&& \mbox{where} \quad
u_0=\gamma(u_{\hm m},\ldots,u_{\hm 1},u_1,\ldots,u_{\ell}).
\end{align*}

\subsection{Changing bases}
The original mode bases relative to which a tensor is recorded 
are considered arbitrary. In particular, even if a multilinear map  $\gamma$ 
exhibits a sparsity pattern for some decompositions of its modes, there is no 
reason to suppose that our chosen multiarray representation $\Gamma$ of $\gamma$ 
is recorded relative to bases that respect these decompositions. 
Indeed the main task, for a given $\Gamma$, is to find bases of the modes that  
determine the decompositions of some sparsity pattern for $\Gamma$, and then to rewrite 
$\Gamma$ relative to these bases.
More generally we can compose 
$\langle  \Gamma \mid \ldots\rangle$ with linear maps $X_a:V_a\to U_a$, 
defining a new tensor $\Gamma^X$ on axes $V_1,\ldots,V_{\ell}$ by
\begin{align*}
   \langle \Gamma^X\mid v_1,\ldots,v_{\ell}\rangle := \langle \Gamma\mid X_1 v_1,\ldots, X_{\ell} v_{\ell}\rangle.
\end{align*}

\section{Sparsity patterns}
\label{sec:sparsity-patterns}
Adopting the notational convention 
in Section~\ref{sec:convention} we can simplify  
condition~\eqref{eq:sparsity-pattern} and define 
a \textit{sparsity pattern} for a tensor $\Gamma$ exhibited by decompositions
\begin{align}
    \label{eq:mode-decomp}
    U_a & = X_{a1}\oplus X_{a2}\oplus \cdots \oplus X_{a k_a}, & a\in [\ell],
\end{align}
to be a set $\Delta \subset \prod_a [k_a]$ satisfying
\begin{align}
    \label{eq:null-pattern}
    (i_1,\ldots,  i_{\ell}) \not\in \Delta
    & \quad \implies \quad 
    \langle \Gamma \mid 
    X_{1 i_1},\ldots, X_{\ell i_{\ell}} \rangle =0,
\end{align}
where $\langle \Gamma \mid X_{1 i_1},\ldots, X_{\ell i_{\ell}}\rangle$ 
is the span of all partial evaluations at all 
$u\in\prod_{a\in[\ell]} X_{a i_a}$.  

Writing $\Gamma$ as a multiarray with respect to bases of the modes 
that align with the decompositions in~\eqref{eq:mode-decomp}, 
the entries of $\Gamma$ 
form blocks corresponding to tuples $(i_1,\ldots,i_{\ell})$ indexing the summands.
The tuples in the sparsity pattern $\Delta$ correspond to blocks that 
are permitted to have nonzero entries.
Every subset of $[k_1]\times\cdots \times [k_{\ell}]$ 
occurs as the sparsity pattern of some tensor, and a single tensor may have 
several sparsity patterns corresponding to different decompositions of its modes.

Sparsity patterns $\Delta,\Upsilon\subseteq [k_1]\times\cdots \times [k_{\ell}]$ 
are considered \textit{equivalent} if, for each $a\in[\ell]$, there is a  
permutation $\sigma_a$ of $[k_a]$ such that
\begin{align*} 
    \Upsilon &=\{(\sigma_1(i_1),\ldots, \sigma_{\ell}(i_{\ell})) : (i_1,\ldots, i_{\ell})\in \Delta\}.
\end{align*}

We can visualize sparsity patterns in multiarray tensors 
by computing, for each $a\in[\ell]$, contractions on some ordered basis 
of $X_{ai_a}$, where $(i_1,\ldots,i_{\ell})\in \Delta$. As the spaces 
$X_{ai_a}$ decompose the mode $U_a$, the contractions $U_a$
can thus be visualized as a square matrix partitioned into $k_a$ columns.
This introduces an arbitrary ordering, and our 
visualizations of the corresponding sparsity patterns depict this order. 

\begin{figure}[!thbp]
    \begin{center}
        \vspace{-2.2ex}
        \includegraphics[width=\textwidth]{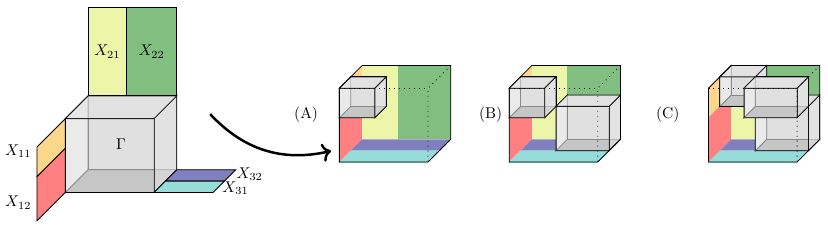}
        \vspace{-2ex}
    \end{center}
    \caption{{\small Sparsity patterns exhibited by decompositions of arrays.  
    In (A), the pattern is $\Delta=\{(1,1,1)\}$;
    in (B), $\Delta=\{(1,1,1),(2,2,2)\}$; and in (C), $\Delta=\{(1,1,2),(1,2,1), (2,2,2)\}$.
    }}
    \label{fig:structure-1}
\end{figure}

The granularity of the sparsity patterns we seek is calibrated by the dimensions 
of the spaces $X_{ai_a}$ in the decompositions. This allows sparsity patterns to 
describe a spectrum of tensor phenomena.
Figures~\ref{fig:structure-1} and~\ref{fig:curve-1} illustrate a variety of
sparsity patterns for tensors of valence 3 using shaded regions to
indicate nonzero regions. 

\begin{figure}[!htbp]
    \begin{center}
        \vspace{-2ex}
        \includegraphics[width=0.8\textwidth]{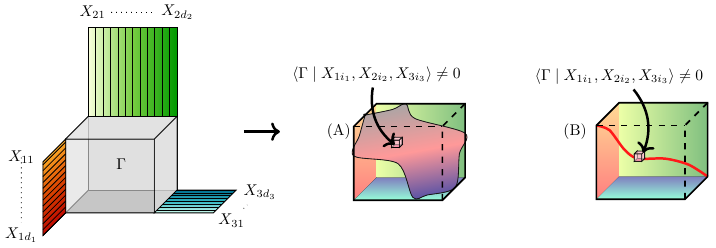}
        \vspace{-2ex}
\end{center}
\caption{Curve and surface sparsity patterns.}
\label{fig:curve-1}
\end{figure}
\medskip 

The objective, given a tensor and a sparsity pattern, is 
to find (bases for) decompositions of the modes that exhibit 
the sparsity pattern. \textit{In particular, we emphasize that full linear 
transformations are allowed on each mode; we are not constrained 
to only working with permutations of the entries 
in a multiarray.}

\section{Chisels}
\label{sec:chisels}
The definition of sparsity pattern given in the previous section is too 
broad to be practicable. It includes, for example, all higher-dimensional 
analogues of Latin square configurations. 
We focus our attention on 
families of sparsity patterns that are located on or near the 
intersection of hyperplanes. 
For this, we introduce a device we call a 
\textit{chisel}, which is an $m\times \ell$ matrix 
\begin{align}
    \label{eq:chisel_matrix}
\ch & = \left[\begin{array}{ccc}
    C_{11} & \cdots & C_{1\ell} \\
    \vdots & & \vdots \\
    C_{m1} & \cdots & C_{m\ell} \\
\end{array}\right]
\end{align} 
with entries in $\mathbb{K}$. 
The integer $\ell$ corresponds to the valence of the tensors upon which the 
chisel is deployed, and $m$ corresponds to the number of planes involved in 
the sparsity pattern. Each pair $(\ch,\delta)$, where $\ch$ is a chisel and 
\begin{align}
    \label{eq:define_delta}
\delta &=(\delta_1,\ldots,\delta_{\ell}) \in \mathbb{K}^{k_1}\times \cdots \times \mathbb{K}^{k_{\ell}}
\end{align} 
for positive integers $k_1,\ldots,k_{\ell}$, defines the sparsity pattern
\begin{align}
    \label{eq:cluster-chisel}
    \Delta(\ch,\delta) &= \left\{
        (i_1,\ldots, i_{\ell})\in \prod_{a=1}^{\ell}[k_a] ~:~ \forall j\in[m],~~
        \sum_{a=1}^{\ell}\ch_{ja}\delta_a(i_a) = 0 \right\}.
\end{align}

\begin{ex}
Consider the pair
    \begin{align}
    \label{eq:sparsity-pattern-0}
        \ch & =\begin{bmatrix}
            1 & 1 & -1  
        \end{bmatrix},
        &
        \delta &=
        \left(\begin{bmatrix} 0.5 \\ 1.0\end{bmatrix}, 
        \begin{bmatrix} 0.5\\ 1.0 \\ 2.0\end{bmatrix},
        \begin{bmatrix}
            1.0 \\ 1.5\\ 2.0\\ 2.5 
        \end{bmatrix}
        \right)\in\K^2\times\K^3\times\K^4.
    \end{align}
    Since $1\cdot \delta_1(1)+1\cdot \delta_2(2)+(-1)\cdot \delta_3(2)=0.5+1.0-1.5=0.0$, 
    we see that $\Delta(\ch,\delta)$ contains the tuple $(1,2,2)$. In fact,
    \begin{align*}
    \Delta(\ch,\delta) &=\{ (1,1,1), (1,2,2), (1,3,4), (2,1,2), (2,2,3)  \}.
    \end{align*} 
    Figure~\ref{fig:sparsity-pattern-0} plots the possible nonzero values of a 
    tensor with sparsity pattern $\Delta(\ch,\delta)$. 
    On the left is an $\ell$-uniform $\ell$-partite
    hypergraph capturing the combinations of $\delta_a(i_a)$ that sum to zero, 
    and on the right is a point cloud of a tensor showing the possible regions where the 
    points may congregate.
\end{ex}

\pgfmathsetmacro{\yy}{0.75}
\pgfmathsetmacro{\zz}{0.3}
\begin{figure}[!htbp]
    \centering
    \begin{tikzpicture}
        \node (A) at (0,0) {\begin{tikzpicture}
            
            \pic at (0,0,3*\zz) {ccube={\yy/\xx/\zz/red}};
            \pic at (\yy,0,2*\zz) {ccube={\yy/\xx/\zz/green}};
            \pic at (2*\yy,0,1*\zz) {ccube={\yy/\xx/\zz/yellow}};
            \pic at (0,\xx,2*\zz) {ccube={\yy/\xx/\zz/gray}};
            \pic at (1*\yy,\xx,0*\zz) {ccube={\yy/\xx/\zz/blue}};
            
            \pic at (0,2*\xx,0) {linecube={3*\yy/-2*\xx/4*\zz}};

            \node[scale=0.75] at (-0.75*\xx,0.25*\yy,3*\zz) {$1$};
            \node[scale=0.75] at (-0.75*\xx,0.75*\yy,3*\zz) {$2$};
            \node[scale=0.75] at (-1.5*\xx,0.5*\yy,3*\zz) {$i$};

            \node[scale=0.75] at (0*\xx,-1*\yy,0*\zz) {$1$};
            \node[scale=0.75] at (1.5*\xx,-1*\yy,0*\zz) {$2$};
            \node[scale=0.75] at (3*\xx,-1*\yy,0*\zz) {$3$};
            \node[scale=0.75] at (1.5*\xx,-1.5*\yy,0*\zz) {$j$};

            \node[scale=0.75] at (3.5*\xx,-1*\yy,-1*\zz) {$1$};
            \node[scale=0.75] at (3.5*\xx,-1*\yy,-2.5*\zz) {$2$};
            \node[scale=0.75] at (3.5*\xx,-1*\yy,-4*\zz) {$3$};
            \node[scale=0.75] at (3.5*\xx,-1*\yy,-5.5*\zz) {$4$};
            \node[scale=0.75] at (4.5*\xx,-1*\yy,-3.25*\zz) {$k$};
            
        \end{tikzpicture}};
        \node[scale=0.7, anchor=east] at (A.west)  {
            \begin{tikzpicture}
                \coordinate (d11) at (0, 0);\coordinate (d21) at (2, 0);\coordinate (d31) at (4, 0);
                \coordinate (d12) at (0,-1.5);\coordinate (d22) at (2,-1.5);\coordinate (d32) at (4,-1.5);
                                            \coordinate (d23) at (2,-3);\coordinate (d33) at (4,-3);
                                                                        \coordinate (d34) at (4,-4.5);

                \begin{scope}[opacity=.5]
                \draw[red!50,line width=5mm, rounded corners] (d11) edge[-,"$+$"] (d21) edge[-,"$-$"] (d31);
                \draw[green!50, line width=5mm, rounded corners] (d11) -- (d22) -- (d32);
                \draw[yellow, line width=5mm, rounded corners] (d11) -- (d23) -- (d34);

                \draw[gray!50,line width=5mm, rounded corners] (d12) -- (d21) -- (d32);
                \draw[blue!50, line width=5mm, rounded corners] (d12) -- (d22) -- (d33);
                \end{scope}                                                      
                \node[scale=0.85,anchor=center] at (0,1) {$i\in [2]$};  
                \node[scale=0.85,anchor=center] at (2, 1) {$j\in [3]$}; 
                \node[scale=0.85,anchor=center] at (4, 1) {$k\in [4]$};
                \draw (-1,0.5) -- ++(6,0);
                \node[anchor=west] at (d11) {$0.5$};  \node[anchor=center] at (d21) {$0.5$}; \node[anchor=east] at (d31) {$1.0$};
                \node[anchor=west] at (d12) {$1.0$};  \node[anchor=center] at (d22) {$1.0$}; \node[anchor=east] at (d32) {$1.5$};
                                       \node[anchor=center] at (d23) {$2.0$}; \node[anchor=east] at (d33) {$2.0$};
                                                             \node[anchor=east] at (d34) {$2.5$};
            \end{tikzpicture}
            };
        \end{tikzpicture}
        \caption{Sparsity pattern $\Delta(\ch,\delta)$ for the $(\ch,\delta)$
        in equation \eqref{eq:sparsity-pattern-0}.}
        \label{fig:sparsity-pattern-0}
\end{figure}

Our goal, for a given tensor and chisel $\ch$, is to determine mode decompositions that 
exhibit the patterns $\Delta(\ch,\delta)$ as sparsity patterns of the tensor.
The shapes of sparsity patterns can vary even with the same chisel.  For instance, 
each of the patterns in Figure~\ref{fig:cluster-patterns} arise from the chisel 
$\ch=[1,\;1,\;-1]$ for different choices of $\delta$.

\pgfmathsetmacro{\yy}{0.75}
\pgfmathsetmacro{\zz}{0.3}
\begin{figure}[!htbp]
    \centering
\begin{subfigure}[t]{0.45\textwidth}
    \centering
    \begin{tikzpicture}
        \node (A) at (0,0) {\begin{tikzpicture}
            
            \pic at (0,0,3*\zz) {ccube={\yy/\xx/\zz/gray}};
            \pic at (0,\xx,2*\zz) {ccube={\yy/\xx/\zz/gray}};
            \pic at (2*\yy,0,1*\zz) {ccube={\yy/\xx/\zz/gray}};
            \pic at (2*\yy,\xx,0*\zz) {ccube={\yy/\xx/\zz/gray}};
            \pic at (1*\yy,0,2*\zz) {ccube={\yy/\xx/\zz/gray}};
            \pic at (1*\yy,\xx,1*\zz) {ccube={\yy/\xx/\zz/gray}};
            
            \pic at (0,2*\xx,0) {linecube={3*\yy/-2*\xx/4*\zz}};
        \end{tikzpicture}};
        \node[scale=0.7, anchor=east] at (A.west)  {
            $\displaystyle
            \delta =
            \left(\begin{bmatrix} 0.5 \\ 1.0\end{bmatrix}, 
            \begin{bmatrix} 0.5\\ 1.0 \\ 1.5\end{bmatrix},
            \begin{bmatrix}
                1.0 \\ 1.5\\ 2.0\\ 2.5
            \end{bmatrix}
            \right)
            $};
        \end{tikzpicture}
        \caption{}
    \end{subfigure}
    \hspace{5mm}
    \begin{subfigure}[t]{0.45\textwidth}
        \centering 
        \begin{tikzpicture}
        \node (B) at (0,0) {\begin{tikzpicture}
            
            \pic at (0,0,3*\zz) {ccube={\yy/\xx/\zz/gray}};
            \pic at (0,\xx,2*\zz) {ccube={\yy/\xx/\zz/gray}};
            \pic at (1*\yy,\xx,1*\zz) {ccube={\yy/\xx/\zz/gray}};
            \pic at (2*\yy,0,0*\zz) {ccube={\yy/\xx/\zz/gray}};
            
            \pic at (0,2*\xx,0) {linecube={3*\yy/-2*\xx/4*\zz}};
        \end{tikzpicture}};
        \node[scale=0.7, anchor=east] at (B.west){
            $\displaystyle
            \delta =
            \left(\begin{bmatrix} 0.5 \\ 1.0\end{bmatrix}, 
            \begin{bmatrix} 0.5\\ 2.5 \\ 4.5\end{bmatrix},
            \begin{bmatrix}
                1.0 \\ 1.5\\ 3.5\\ 5.0
            \end{bmatrix}
            \right)
            $};
\end{tikzpicture}
\caption{}
\end{subfigure}

\begin{subfigure}[t]{0.45\textwidth}
    \centering
    \begin{tikzpicture}
        \node (A) at (0,0) {\begin{tikzpicture}
            \pic at (0,0,2*\zz) {ccube={\yy/\xx/\zz/gray}};
            \pic at (\yy,0,0*\zz) {ccube={\yy/\xx/\zz/gray}};
            \pic at (0,\xx,1*\zz) {ccube={\yy/\xx/\zz/gray}};
            \pic at (0,2*\xx,0) {linecube={3*\yy/-2*\xx/4*\zz}};
        \end{tikzpicture}};
        \node[scale=0.7, anchor=east] at (A.west)  {
            $\displaystyle
            \delta =
            \left(\begin{bmatrix} 0.5 \\ 1.0\end{bmatrix}, 
            \begin{bmatrix} 0.5\\ 1.5 \\ 5.5\end{bmatrix},
            \begin{bmatrix}
                0.0 \\ 1.0 \\ 1.5 \\ 2.0
            \end{bmatrix}
            \right)
            $};

        \end{tikzpicture}
        \caption{}
    \end{subfigure}
    \hspace{5mm}
    \begin{subfigure}[t]{0.45\textwidth}
        \centering 
        \begin{tikzpicture}
        \node (B) at (0,0) {\begin{tikzpicture}
            
            \pic at (0,0,3*\zz) {ccube={\yy/\xx/\zz/gray}};
            \pic at (0,\xx,2*\zz) {ccube={\yy/\xx/\zz/gray}};
            \pic at (1*\yy,0,1*\zz) {ccube={\yy/\xx/\zz/gray}};
            \pic at (1*\yy,\xx,0*\zz) {ccube={\yy/\xx/\zz/gray}};
            
            \pic at (0,2*\xx,0) {linecube={3*\yy/-2*\xx/4*\zz}};
        \end{tikzpicture}};
        \node[scale=0.7, anchor=east] at (B.west){
            $\displaystyle
            \delta =
            \left(\begin{bmatrix} 0.5 \\ 1.0\end{bmatrix}, 
            \begin{bmatrix} 0.5\\ 1.5 \\ 3.5\end{bmatrix},
            \begin{bmatrix}
                1.0 \\ 1.5\\ 2.0\\ 2.5
            \end{bmatrix}
            \right)
            $};  
\end{tikzpicture}
\caption{}
\end{subfigure}
\caption{Sparsity patterns $\Delta([1,1,-1],\delta)$ for some choices of $\delta$.}
\label{fig:cluster-patterns}
\end{figure}

Sparsity patterns corresponding to a chisel generally bear no 
resemblence to the hyperplanes defining the chisel, 
as the following example illustrates.

\begin{ex}
\label{ex:sphere-lab}
Consider the discretized surface
of a sphere with radius $r$ and center $(c_1,c_2,c_3)$, as illustrated in
Figure~\ref{fig:sphere-pattern}(A). The equation of this surface is
$(x-c_1)^2+(y-c_2)^2+(z-c_3)^2=r^2$, so if we set
$\delta_a(i_a)=(i_a-c_a)^2-r^2/3$  then $\delta_1(i)+\delta_2(j)+\delta_3(k)=0$
exactly when $(i-c_1)^2+(j-c_2)^2+(k-c_3)^2=r^2$.  Thus, the sparsity pattern
$\Delta(\ch,\delta)$ for the chisel $\ch=[1,1,1]$ describes the sphere.

\begin{figure}[!htbp]
    \centering
    \begin{subfigure}[t]{0.4\textwidth}
        \centering
    \includegraphics[width=1in]{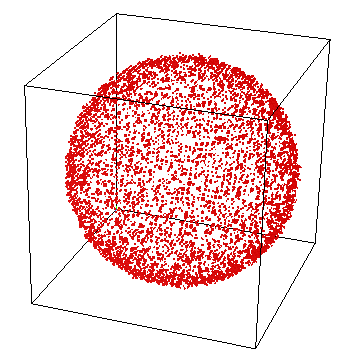}
    \caption{}
    \end{subfigure}
    \begin{subfigure}[t]{0.4\textwidth}
        \centering
    \includegraphics[width=1in]{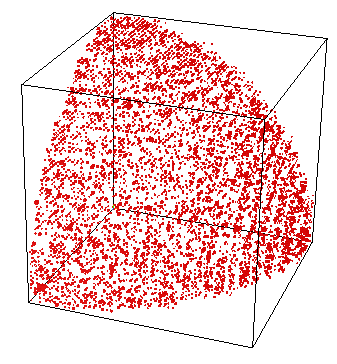}
    \caption{}
    \end{subfigure}
    \caption{Sparsity pattern $\Delta([1,1,1],\delta_a(i_a)=(i_a-c_a)^2-r^2/3)$ 
    together with its sorted level-set form.
    }
    \label{fig:sphere-pattern}
\end{figure}

The sphere pattern is clearly symmetric, with 
\begin{align*}
\delta_a(i_a) &=(\pm (i_a-c_a))^2+r^2/3=\delta_a(2c_a-i_a).
\end{align*}
By choosing a permutation $\sigma_a$ that arranges 
the sequence of scalars $\delta_a( i_{\sigma(a)})$ 
in decreasing order, we group together coordinates with the same $\delta$ values.
In that order the pattern appears as a discretized octant of the sphere 8 layers 
thick, as illustrated in Figure~\ref{fig:sphere-pattern}(B).  

One of the design choices in our algorithms is to use such 
an ordering. Our algorithms are therefore predisposed to locate sparsity patterns 
of the type depicted in Figure~\ref{fig:sphere-pattern}(B) rather than that 
in Figure~\ref{fig:sphere-pattern}(A).
Nevertheless, any tensor admitting one of these patterns necessarily admits the other.
Thus, one can view our design choice as selecting a canonical order from the 
possible orders. A notebook replicating this experiment can be found in
\cite{OpenDleto}*{SphereLab}.
\end{ex}

\section{Derivations}
\label{sec:derivations}
Chisels have so far been used only to \textit{define} 
the classes of sparsity patterns we wish to study. 
In this section we explain how they can also be used to 
\textit{detect} sparsity patterns in a given tensor. 
Historically, derivations have proved to be  useful 
when studying products. Through their interpretation 
as multilinear functions, tensors generalize distributive 
binary products.
It is therefore reasonable to explore invariants 
for tensors that behave like derivations.

\subsection{Derivations associated with chisels}
Consider a 3-tensor with identical modes $U_1=U_2=U_3$. 
This could have come, for example, from the 
(homogeneous) multiplication $*:A\times A\to A$ 
of a $\mathbb{K}$-algebra $A$. 
As in~\cite[Chapter III, Section 6]{Jac}, a \textit{derivation} 
in this context is a $\mathbb{K}$-linear function $D:A\to A$ satisfying 
\begin{align}
    \label{eq:algebra-der}
    \forall a\in A,\,\forall b\in A, && D(a*b)=D(a)*b+a*D(b).
\end{align}

Algebras can act on other spaces, leading to more general 
bilinear products of the form $*:A\times B\to C$. 
Since the operator $D$ can no longer act in the same way 
on the three spaces $A$, $B$, $C$, we must  
generalize the derivation definition accordingly. 
Denoting those actions by $D_A$, $D_B$, and $D_C$, we 
get the condition
\begin{align*}
    \forall a\in A,\,\forall b\in B, && D_C(a*b)=D_A(a)*b+a*D_B(b).
\end{align*}

In view of our notational convention (Section~\ref{sec:convention}), 
let us write
$*$ as a product $\langle \cdots \rangle:A\times B\times C^*\to \K$, where
$\langle a,b,c^{\top}\rangle:=c^{\top}(a*b)$, which allows us 
reformulate the derivation 
condition as follows:
\begin{align*}
\forall a\in A,\,\forall b\in B,\,\forall c\in C, &&
\langle D_A(a),b,c\rangle+\langle a,D_B(b),c\rangle-\langle a,b,D_C^{*}(c)\rangle=0.
\end{align*}
This suggests the following general definition.

\begin{defn}
A tuple $(D_1,\ldots,D_{\ell})$ of operators on the modes 
$U_1,\ldots,U_{\ell}$ of a tensor $\Gamma$ is a 
\emph{$\ch$-derivation} of $\Gamma$ if it satisfies the linear system 
\begin{equation}
    \label{eq:C-derivation}
    \begin{array}{cccc}    
    \ch_{11}\langle \Gamma\mid D_1 u_1,\ldots, u_{\ell}   \rangle
        & + \qquad\cdots\qquad + & \ch_{1\ell}\langle \Gamma\mid u_1,\ldots, D_{\ell}u_{\ell}\rangle 
        & = \quad 0  
        \\
        \vdots &  & \vdots  & \quad \vdots \\
    \ch_{m1}\langle \Gamma\mid D_1 u_1,\ldots, u_{\ell}   \rangle
        & + \qquad\cdots\qquad + & \ch_{m\ell}\langle \Gamma\mid u_1,\ldots, D_{\ell}u_{\ell}\rangle 
        & = \quad 0 
    \end{array}
\end{equation}
for all $(u_1,\ldots,u_{\ell})\in U_1\times\cdots\times U_{\ell}$.
Define 
\begin{align}
\label{eq:define-DerC}
\Der(\ch,\Gamma) ~&:=~ \{\,(D_1,\ldots,D_{\ell})~\mbox{satisfying system}~\eqref{eq:C-derivation}\,\}
\end{align}
to be the $\K$-space of all $\ch$-derivations of $\Gamma$. 
\end{defn}

\begin{rem}
    \label{rmk:engaged}
    Suppose, for some $a\in[\ell]$, that column $a$ of 
    $\ch$ consists entirely of zeros. Then the derivation condition 
    imposes no constraint on the operator $D_a$. In that case we say 
    that $\ch$ is \textit{not engaged} on mode $a$ and impose the 
    condition $D_a=0$.  
\end{rem}

To help digest this key definition, we describe the 
space of derivations of a tensor coming from a finite-dimensional algebra.

\begin{ex}
    \label{ex:derivation_example}
    Consider the polynomial algebra $\K[x]/(x^d-1)$. It has a 
    multiplication table relative to the basis $\{1,x,x^2,\ldots,x^{d-1}\}$
    with relations $x^{i}*x^{j}=x^{i+j}$ if $i+j<d$ and
    $x^{i}*x^{j}=x^{i+j-d}$ if $i+j\geq d$, and is therefore a 3-tensor
    represented as an array
    $\Gamma\in \K^{d\times d\times d}$, where for $1\leq i\leq d$,
    \begin{align*}
    \overset{x^i}{\Gamma[:,:,i]} & = 
    \begin{bmatrix}
    A_i & 0 \\ 0 & A_{d-i}
    \end{bmatrix}, 
    & 
    A_i = \begin{bmatrix} 0 & \cdots & 0 & 1 \\
                                     0 & \cdots & 1 & 0 \\
                                     \vdots & \reflectbox{$\ddots$} & & \vdots \\
                                     1 & 0 & \cdots & 0 
    \end{bmatrix} \in \K^{i\times i}.
    \end{align*}  
    Let $\mathbb{A}$ consist of all matrices of the form
        \begin{align*}
            \begin{bmatrix}
                \lambda_0  & \lambda_1 &  & \cdots & \lambda_{d-1}\\
                \lambda_{d-1} & \lambda_0 & \lambda_1 & \cdots & \lambda_{d-2} \\
                \vdots & \ddots & \ddots & \ddots & \vdots \\
                \lambda_2 &  & \lambda_{d-1} & \lambda_0 & \lambda_1 \\
                \lambda_1 & \lambda_2 & \cdots & \lambda_{d-1} & \lambda_0 
            \end{bmatrix}, && \lambda_i\in \K.
        \end{align*}
    Solving for $D_1,D_2,D_3$ in the linear system \eqref{eq:C-derivation} 
    with $\ch=[1,1,-1]$, we get
    \begin{align*}
        \Der([1,1,-1],\Gamma) & = \left\{
            \left(X, Y, X+Y\right) ~\middle|~
            X,Y\in\mathbb{A} \right\}.
    \end{align*}
These are the derivations of the algebra 
$\K[x]/(x^d-1)$ satisfying condition~\eqref{eq:algebra-der}.
\end{ex}

\subsection{Sparsity patterns produce derivations}
If we have a decomposition that exhibits 
a sparsity pattern for a tensor, then we can always find derivations:
\begin{thm}
\label{thm:sparsity-to-der}
    Let $\Gamma$ be an $\ell$-tensor, let $\ch$ be an 
    $\ell$-chisel, 
    and let $\delta\in \K^{k_1}\times \cdots \times
    \K^{k_{\ell}}$.
    Suppose we have decompositions 
    $U_a=X_{a 1}\oplus \cdots \oplus X_{a k_a}$ 
    of the modes that exhibit the sparsity pattern 
    $\Delta(\ch,\delta)$ for $\Gamma$. For $a\in[\ell]$, let 
    $D_a$ be the
    linear operator with $\delta_{a i_a}$-eigenspaces $X_{a i_a}$. Then
    $D=(D_1,\ldots, D_{\ell})$ is a $\ch$-derivation of $\Gamma$.    
\end{thm}

\begin{proof}
    For each $a\in[\ell]$ and 
    $i_a\in[k_a]$,
    let $x_{a i_a}$ be vectors in $X_{a i_a}$, 
    which by construction are $\delta_{a i_a}$-eigenvectors 
    of $D_a$. Hence, for each $1\leq j\leq m$, we have
    \begin{align*}
        \ch_{j1}\langle \Gamma\mid &D_1 x_{1i_1},x_{2i_2},\ldots,x_{\ell i_{\ell}}\rangle 
         +
        \cdots 
        +
        \ch_{j\ell}\langle \Gamma\mid x_{1i_1},\ldots, x_{\ell-1 i_{\ell-1}},D_{\ell} x_{\ell i_{\ell}}\rangle \\
        & = 
        \ch_{j1}\delta_{1 i_1}\langle \Gamma\mid x_{1i_1},\ldots,x_{\ell i_{\ell}}\rangle
        +
        \cdots 
        +
        \ch_{j\ell}\delta_{\ell i_{\ell}}\langle \Gamma\mid x_{1i_1},\ldots,x_{\ell i_{\ell}}\rangle\\
        & = \left(\sum_{a=1}^{\ell} \ch_{ja} \delta_{a i_a}\right)\langle \Gamma\mid x_{1 i_1},\ldots,x_{\ell i_{\ell}}\rangle.
    \end{align*}
    Equation~\eqref{eq:C-derivation} is satisfied for $u_a=x_{a i_a}$ if 
    $\langle \Gamma\mid x_{1 i_1},\dots,x_{\ell i_{\ell}}\rangle = 0$.
    If not, since the decomposition exhibits the sparsity pattern 
    $\Delta(\ch,\delta)$, we have 
    $\sum_{a=1}^{\ell} \ch_{ja} \delta_{a i_a}=0$, so 
    equation~\eqref{eq:C-derivation} is still satisfied
    for $u_a=x_{a i_a}$.
    As the spaces $X_{a i_a}$ decompose the modes of $t$, the union 
    of any choice of bases for them span the $U_a$. It follows 
    that $(D_1,\ldots,D_{\ell})$ satisfies 
    condition~\eqref{eq:C-derivation}, and hence 
    is a $\ch$-derivation of $\Gamma$.
\end{proof}

\subsection{Derivations reveal sparsity patterns}
We have just seen that sparsity patterns in tensors ensure 
the existence of derivations. Conversely, derivations 
can be leveraged to discover sparsity patterns in tensors.

\begin{thm}
    \label{thm:basic-chisel}
    Let $(D_1,\ldots,D_{\ell})$ be a diagonalizable
    $\ch$-derivation of a tensor $\Gamma$ for some chisel $\ch$.
    For each $a\in [\ell]$, let $X_{a1},\ldots,X_{ak_a}$ be the 
    eigenspaces of $D_a$ with eigenvalues 
    $\delta_{a1},\ldots,\delta_{ak_a}$.
    This eigenspace decomposition of the modes exhibits 
    the sparsity pattern 
    $\Delta(\,\ch\,,\,(\delta_1,\ldots,\delta_{\ell})\,)$ of $\Gamma$.
\end{thm}

\begin{proof}
    For $a\in[\ell]$, let $X_{ai_a}$ be the
    $\delta_{ai_a}$-eigenspace of the diagonalizable operator $D_a$. 
    Since $(D_1,\ldots,D_{\ell})$ is a 
    $\ch$-derivation, for each $j\in [m]$ and $x_{ai_a}\in X_{a,i_a}$,  
    \begin{align*}   
    0=~~&\ch_{j1}\langle \Gamma\mid D_1\,x_{1i_1},\ldots,x_{\ell i_{\ell}}\rangle+
    \cdots+
    \ch_{j\ell}\langle \Gamma\mid x_{1i_1},\ldots,D_{\ell}\,x_{\ell i_{\ell}}\rangle \\
    =~~& \ch_{j1}\langle \Gamma\mid \delta_{1i_1}x_{1i_1},\ldots,x_{\ell i_{\ell}}\rangle+
    \cdots+
    \ch_{j\ell}\langle \Gamma\mid x_{1i_1},\ldots,\delta_{\ell i_{\ell}}x_{\ell i_{\ell}}\rangle
     \\
    =~~&
    \left(\sum_{a=1}^{\ell}\ch_{ja}\delta_{ai_a}\right) \langle \Gamma\mid x_{1i_1},\ldots,x_{\ell i_{\ell}}\rangle. 
    \end{align*}
    It follows that 
    \begin{align*} 
    \langle \Gamma\mid X_{1i_1},\ldots,X_{\ell i_{\ell}}\rangle\neq 0 
    \quad &\implies \quad
    (i_1,\ldots,i_{\ell})\in \Delta(\ch,\delta),
    \end{align*}
    so the decompositions exhibit the sparsity pattern 
    $\Delta(\ch,\delta)$ in $\Gamma$.
\end{proof}

\subsection{Scalar derivations reveal nothing}
\label{sec:trivial}
Let $(D_1,\ldots,D_{\ell})$ be a derivation of a tensor $\Gamma$.
If the eigenspace decomposition of some $D_a$ consists of the 
single space $U_a$, then $D_a$ has a single eigenvalue $\delta_a$, 
so $D_a=\delta_a 1_{U_a}$.
If this is the case for every $a\in [\ell]$, 
we call $(D_1,\ldots, D_{\ell})$ a 
\textit{scalar derivation} of $\Gamma$.

Notice that if $\delta\in\K^{\ell}$ is a row vector 
with $\ch \delta^{\top}=0$, 
then $(\delta_11_{U_1},\ldots,\delta_{\ell}1_{U_{\ell}})$ 
is a scalar derivation of \textit{any} tensor $\Gamma$. Hence, the dimension 
of $\Der(\ch,\Gamma)$ is at least the dimension of the null space of $\ch$.
If $(D_1,\ldots, D_{\ell})$ is a scalar derivation, then
the sparsity pattern $\Delta(\ch,\delta)=\{(1,\ldots,1)\}$ 
is trivial. Thus, scalar derivations  
tell us nothing about the existence of nontrivial sparsity patterns
in a given tensor, so we disregard them.

\section{Chiseling}
\label{sec:chiseling}
Theorems~\ref{thm:sparsity-to-der} and~\ref{thm:basic-chisel} 
provide the foundation for
practical algorithms to detect sparsity patterns in tensors. 
In this section we present two such algorithms. The first,
Algorithm~\ref{algo:basic-stratify}, is an elementary 
field-agnostic version based directly on 
Theorem~\ref{thm:basic-chisel}.
The second, Algorithm~\ref{algo:detailed-stratify}, is a 
practical heuristic version designed for use on real data.
Our Julia implementation~\cite{OpenDleto} is based on  
Algorithm~\ref{algo:detailed-stratify}.

\subsection{Basic chiseling}
Algorithm~\ref{algo:basic-stratify} takes as input a tensor 
$\Gamma$ and a chisel $\ch$ 
and returns a decomposition of the modes of $\Gamma$ exhibiting 
a sparsity pattern $\Delta(\ch,\delta)$ for some non-scalar 
derivation $\delta\in\Der(\ch,\Gamma)$, if such exists.
\medskip

\begin{algorithm}
    \caption{{\color{blue} (chiseling in tensors --- a basic version)}}
    \begin{algorithmic}[1]
    \Require a tensor $\Gamma$, and a chisel $\ch$.

    \Ensure decompositions $U_a=X_{a1}\oplus\cdots\oplus X_{ak_a}$ of
    the modes of $\Gamma$ exhibiting a nontrivial sparsity pattern
    $\Delta(\ch,\delta)$ of $\Gamma$,
    for some $\delta$.

    \State Find a non-scalar solution $(\,D_1\,,\,\ldots\,,\,D_{\ell}\,)$ 
    to the linear system~\eqref{eq:C-derivation}, where each 
    $D_a$ is diagonalizable.  

    \State For each $a$, compute the eigenvalues 
    $\delta_a=(\,\delta_{a1}:i_a\in [k_a]\,)$ of $D_a$ and 
    the corresponding eigenspace decomposition
    $U_a=X_{a1}\oplus\cdots\oplus X_{ak_a}$. 
    \State \textbf{return} bases for the spaces $X_{ai_a}$ and the 
    sparsity pattern $\Delta(\ch,(\delta_a:a\in[\ell]))$.
    \end{algorithmic}
    \label{algo:basic-stratify}
    \end{algorithm} 

\noindent \textit{Discussion:}
When working over fields with exact operations, 
such as finite fields or the rationals,
there are several ways to accomplish Lines 1 and 2. 
The system~\eqref{eq:C-derivation} is linear so we can solve it using 
established techniques such as least-squares, LU decomposition, and 
pseudo-inverses. Algorithms for computing  
the eigenspace decompositions in Line 2 can also 
be found in standard linear algebra packages. 
There are issues even in the exact field setting---we mention two of them. 

First, how can we avoid unnecessary overhead 
in solving~\eqref{eq:C-derivation}? 
The default approach solves a linear system with
$d_1\cdots d_{\ell}$ equations in 
$v=d_1^2+ \cdots + d_{\ell}^2$ variables.  
A solution to this large and vastly over-determined system 
can proceed by sampling $(1+\varepsilon)v$ equations to solve 
an approximately square system $M$.  Furthermore, each 
equation uses $O(\sqrt{v})$ variables (far less if 
$\Gamma$ is a sparse array), so mat-vecs $Mv$ 
can be evaluated using $O(v^{1.5})$ field operations.  
An iterative solver can find a solution to $M$ in 
$O(v^{2.5})\subset O((\ell d)^5)$, where 
$d=\max\{d_1,\ldots,d_{\ell}\}$ operations.  
Since this system is nearly square, with high probability 
it has few solutions, so any solution to $Mv=0$ is 
likely to be a solution to the entire system.  

The complexity is therefore $O((\ell d)^5)$, which is expensive. 
In fact, absent additional engineering, this is quadratically 
worse than using HOSVD~\cite{DeL:HOSVD}.  
It would be better to leave the system as it is given and 
use tensor methods to solve it. A project to develop such a solver 
is currently underway. Note, however,
that even reading the data in a 
dense $d_1\times \cdots \times d_{\ell}$ 
array $\Gamma$ requires $O(d^{\ell})$ operations.  
\smallskip 

The second issue is how to select a non-scalar derivation $D$ 
from the solution space in a way that produces the best possible 
results? We comment further on this issue after we present
Algorithm~\ref{algo:detailed-stratify} below.

\subsection{Chiseling with SVDs}
Algorithm~\ref{algo:basic-stratify} is more or less 
field agnostic, but as stated it is clearly inadequate for use with 
practical models of the real and complex numbers. 
In these settings, one can only target approximate solutions 
to~\eqref{eq:C-derivation}, and one must deploy robust floating point 
algorithms. For the remainder of the section, we assume that $\K$ is 
either the real field $\R$ or the complex field $\C$. 
Algorithm~\ref{algo:detailed-stratify} is a
heuristic version of Algorithm~\ref{algo:basic-stratify} 
that works well in these settings.
\medskip

\begin{algorithm}
      \caption{{\color{blue} (chiseling in tensors --- a heuristic version)}}
      \begin{algorithmic}[1]
        \Require
        a tensor $\Gamma$, and a chisel $\ch$.

    \Ensure decompositions $U_a=X_{a,1}\oplus\cdots\oplus X_{a,k_a}$ of 
    the modes of $\Gamma$ exhibiting a sparsity pattern $\Delta(\ch,\delta)$ 
    for some non-scalar $\ch$-derivation $\delta$ of $\Gamma$.

      \State 
      Build 
      the matrix $N$ 
      whose nullspace parameterizes~\eqref{eq:C-derivation}.

      \State Compute the dimension $e$ of the 
      nullspace of $\ch$.

      \State Computing the SVD of $N$, locate
       the $(e+1)^{\text{st}}$ smallest singular value, $\sigma_{e+1}$.

      \State If $\sigma_{e+1}$ is close to 0, use the corresponding singular vector 
      to build matrices $(\,D_1\,,\,\ldots\,,\,D_{\ell}\,)$ that approximately 
      satisfy~\eqref{eq:C-derivation}. Otherwise report that $\Gamma$ 
      does not conform to any sparsity pattern determined by $\ch$.
    
      \State For $a\in [\ell]$, compute invertible $X_a$ such that 
      $D_a X_a=X_a \text{diag}(\delta_{a1},\ldots,\delta_{ak_a})$.
      
      \State \textbf{return} $(X_a : a\in [\ell])$
      and the sparsity pattern $\Delta(\ch,(\delta_a:a\in[\ell]))$.
      \end{algorithmic}
      \label{algo:detailed-stratify}
\end{algorithm} 

\noindent \textit{Discussion:}
The SVD of $N$ produces approximate solutions to the nullspace by
inspecting the singular values closest to $0$.  
One key advantage of SVD is that the singular values are ordered. This 
provides a canonical choice of vector from which to build a `good' derivation 
$D$ in Line 1 of Algorithm~\ref{algo:basic-stratify}, as we now explain. 

Recall that each chisel $\ch$ has its own nullspace of dimension $e$, 
and the vectors in this nullspace give rise to scalar derivations. 
As these involve scalar matrices, they arise as solutions 
to~\eqref{eq:C-derivation} with very little error---they correspond in 
the SVD to the singular values closest to 0. We therefore skip to the 
$(e+1)^{\text{st}}$ smallest singular 
value, $\sigma_{e+1}$, and regard $\sigma_{e+1}$ being close to 0 as a proxy 
for $\Gamma$ having a sparsity pattern of the type detected by the chisel $\ch$.
The corresponding singular vector produces 
$(\,D_1\,,\,\ldots\,,\,D_{\ell}\,)$ 
that approximates a non-trivial $\ch$-derivation of $\Gamma$. 
Thus, proceeding in this way removes the search steps in 
Algorithm~\ref{algo:basic-stratify}. 

\begin{rem}
Algorithms~\ref{algo:basic-stratify} and~\ref{algo:detailed-stratify}
only use diagonalizable operators. This is usually the case in practical 
settings, but one can adapt the technique---whose correctness uses 
Theorem~\ref{thm:basic-chisel}---to handle generalized eigenspaces. 
Indeed, there are situations where it is interesting to 
consider nilpotent derivations, but we only focus on the semisimple 
case in this article.
\end{rem}

\begin{rem}
Based on experiments with the Julia implementation, 
our methods tolerate a certain amount of noise in the data.
Recently, Nick Vannieuwenhoven has conducted a more robust
analysis of variations of Algorithm~\ref{algo:detailed-stratify} under 
noise~\cite{Nick:Chisel}.
\end{rem}

\subsection{Implementation}
\label{sec:Julia}
We have implemented versions of 
Algorithm~\ref{algo:detailed-stratify} 
in the Julia language~\cite{Julia}, 
and the code is available at~\cite{OpenDleto}. 
The implementation makes substantial use of the ITensor 
Software Library~\cite{ITensor}.
The GitHub repository~\cite{OpenDleto} contains 
Jupyter notebooks such as~\cite{OpenDleto}*{SphereLab}, which was 
mentioned in Example~\ref{ex:sphere-lab}. 
The notebook~\cite{OpenDleto}*{Chiseling101} 
demonstrates the basic features of the implementation
and allows users run versions of the 
experiments we describe in the next section. 
The notebook~\cite{OpenDleto}*{WhatWeEatInAmerica} 
illustrates chiseling on real data sets.
The notebooks are intended to help researchers explore 
chiseling, whether their own tensors
arise from numerical examples, synthetic 
data, or real data.
They also show how to add a basic noise model to 
experiments using optional parameters. 

\section{Chiseling 3-tensors: a user's guide}
\label{sec:tools}
A key feature of Algorithms~\ref{algo:basic-stratify} 
and~\ref{algo:detailed-stratify} is the control they
afford users to search for the type of structure
they wish to find in their tensor data---or, more likely, that 
they believe exists within their data. The control comes 
from the ability to choose the chisel matrix $\ch$, 
and in this section we describe several choices.
\smallskip

Sections~\ref{sec:universal}---\ref{sec:centroid} 
introduce three chisels that we have found particularly useful
in our experiments. We describe the sparsity patterns that each 
chisel $\ch$ reveals, and give demonstrations using our Julia 
implementation on synthetically created tensors. 
The demonstrations involve tensors whose three mode dimensions 
are between 40 and 90, and each one took a couple of minutes 
to run on a standard laptop.
None of the experiments incorporated noise 
models, but we have conducted tests that do; 
see also~\cite{Nick:Chisel} for a robust 
discussion of noise tolerance.

In Section~\ref{sec:reasons} we explain why these 
particular chisels reveal 
the corresponding sparsity patterns in tensors, and in 
Section~\ref{sec:tucker} we show how the familiar 
Tucker decomposition can be carried out by chiseling, 
as well as some cases of HOSVD.
\smallskip

For visualization reasons we confine our attention to 3-tensors 
throughout this section, but the properties we discuss 
generalize to higher valence tensors.

\subsection{The universal chisel}
\label{sec:universal}
We first introduce the chisel that serves as a default chisel---for 
reasons explained by Proposition~\ref{prop:universal-chisel},
we call it the \textit{universal chisel}:
\begin{align}
    \label{eq:universal-chisel}
    \ch_{{\rm uni}} &= \left[
        \begin{array}{ccc}
            1 & 1 & 1 
        \end{array}
    \right].
\end{align}

The solution space $\Der(\ch_{{\rm uni}},\Gamma)$ of~\eqref{eq:C-derivation} 
using chisel $\ch_{{\rm uni}}$ forms a Lie algebra, where 
the Lie bracket is the commutator map~\cite[Theorem A]{BMW:Rihanna}. 
We use this fact in the next section to establish uniqueness.
The scalar $\ch_{{\rm uni}}$-derivations are given by
\begin{align} 
\label{eq:scal-der}
\mathrm{scl}(\ch_{\mathrm{uni}}) &= 
    \{~(\lambda_1 I_{d_1},\lambda_2 I_{d_2},\lambda_{d_3} I_{d_{3}})~:~\lambda_1+\lambda_2+\lambda_{3}=0\,\},
\end{align}
a 2-dimensional subspace of $\Der(\ch_{\mathrm{uni}},\Gamma)$. Hence, when 
using $\ch_{\mathrm{uni}}$ in Algorithm~\ref{algo:detailed-stratify}, 
we use the third smallest singular value
of the matrix $N$ in Line 3.

The universal chisel $\ch_{\mathrm{uni}}$ 
was used to reveal the sparsity patterns depicted
in Figure~\ref{fig:main-block}(C) and (D).
Experiments that recover images similar to these 
are described in Figures~\ref{fig:der-chisel-surface} 
and~\ref{fig:der-chisel-sample}, respectively.
\vspace*{4mm}

\begin{figure}[!htbp]
    \centering
    \begin{subfigure}[t]{0.3\textwidth}
       \centering
        \vspace{-3ex}
            \includegraphics{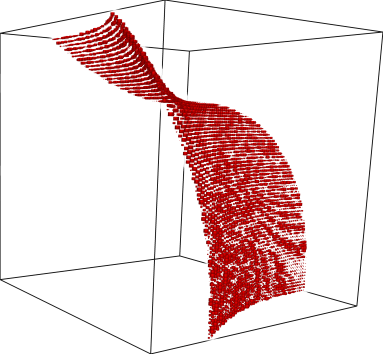}    
    \end{subfigure}
    \begin{subfigure}[t]{0.32\textwidth}
        \centering
        \vspace{-3ex}
        \includegraphics{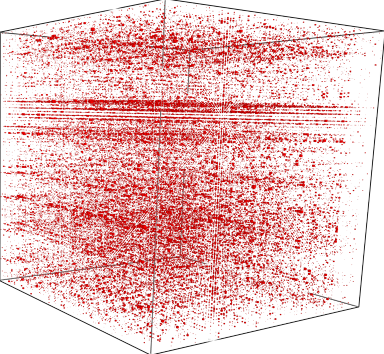}
    \end{subfigure}   
    \begin{subfigure}[t]{0.3\textwidth}
       \centering
        \vspace{-3ex}
        \includegraphics{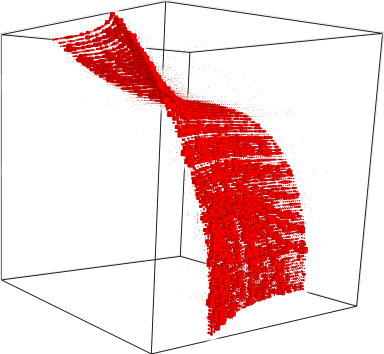}   
    \end{subfigure}
    \caption{{\small The $81\times 81\times 81$ 
    tensor on the left has support constrained near a surface. 
    The tensor was then scrambled with random basis changes 
    to produce the middle tensor. It was then
    passed to Algorithm~\ref{algo:detailed-stratify} with chisel 
    $\ch_{\mathrm{uni}}$, and it returned the tensor on the right.}}
    \label{fig:der-chisel-surface}
    \end{figure}

    \begin{figure}[!htbp]
    \centering
    \begin{subfigure}[t]{0.4\textwidth}
        \centering
        \includegraphics{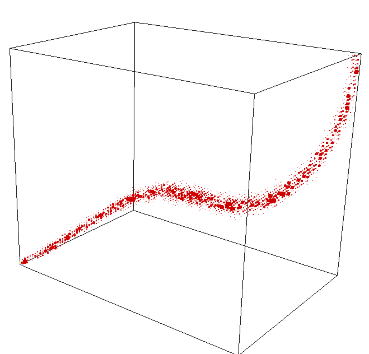}
    \end{subfigure}   
    \begin{subfigure}[t]{0.4\textwidth}
       \centering
        \includegraphics{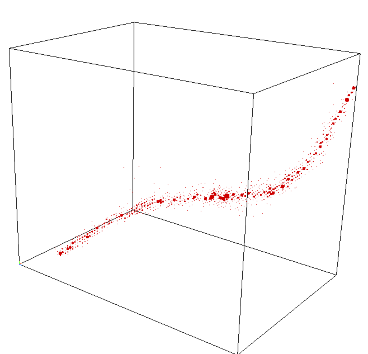}
        \end{subfigure}
    \caption{{\small The $(40\times 60\times 70)$-array 
    on the left was constructed to have support near a curve. 
    The array was again scrambled (image omitted) as in 
    Figure~\ref{fig:der-chisel-surface}.  
    The scrambled array was then passed to our algorithm, 
    which reconstructed the array on the right.}}
    \label{fig:der-chisel-sample}
    \end{figure} 

Recall from Example~\ref{ex:sphere-lab} that~\cite{OpenDleto}*{SphereLab} 
is a notebook that demonstrates how to use our algorithm to recover sparsity patterns 
for data congregated near the surface of a sphere. It uses 
the universal chisel to do this and can be adapted to experiment with 
other ``continuous'' sparsity patterns.

\subsection{Adjoint chisels}
\label{sec:adjoint}
The tensor depicted in Figure~\ref{fig:main-block}{\small {\sc (B)}}
is a decomposition with respect to the front face. Specifically,
it exhibits a sparsity pattern for decompositions of just 
axes 1 and 2 into three nontrivial subspaces. 
Methods to obtain decompositions of this type have been 
developed by other researchers~\citelist{
    \cite{chatroom} \cite{CL:blocksnoise} \cite{CL:blockalgebra }\cite{CS:blocks}}.
These patterns can be detected by Algorithm~\ref{algo:detailed-stratify} 
using an \textit{adjoint chisel}.

There is one adjoint chisel for each (unordered) pair of modes that define 
the face in question. Thus, for a tensor with $\ell$ modes, there are 
$\ell \choose 2$ adjoint chisels, each consisting of a single row. 
The adjoint chisel for Figure~\ref{fig:main-block}{\small {\sc (B)}} 
is 
\begin{align}
\label{eq:adjoint_chisel}
\ch_{{\rm adj}}=\left[
        \begin{array}{ccc}
            1 & -1 & 0 
        \end{array}
    \right].
\end{align}
The solution space $\Der(\ch_{\mathrm{adj}},\Gamma)$ is an associative 
algebra whose product is coordinatewise composition~\cite{BW-tensor}.
The subspace of scalar $\ch_{{\rm adj}}$-derivations is given by 
\begin{align} 
    \label{eq:scal-adj}
    \mathrm{scl}(\ch_{\mathrm{adj}}) &= 
        \{~(\lambda I_{d_1},\lambda I_{d_2},0_{d_{3}})~:~\lambda\in\K\,\},
    \end{align}
and is therefore 1-dimensional.
Hence, when using $\ch_{\mathrm{adj}}$ 
in Algorithm~\ref{algo:detailed-stratify}, we focus on the second 
smallest singular value of the matrix $N$ in Line 3.

As one of the modes 
is disengaged in the adjoint 3-chisels, 
it is not surprising that 
there is no restriction on the data in the tensor along that mode.
In fact, the adjoint chisels reveal `face block' structure 
along the face corresponding to the two active modes. 
Figure~\ref{fig:adj-chisel} shows the outcome of the now 
familiar experiment.

\begin{figure}[!htbp]
    \centering
        \includegraphics{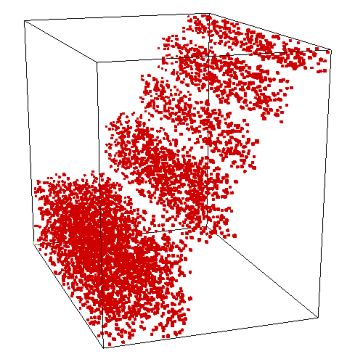}
        \hspace{2cm}
        \includegraphics{images/tensor-face-recov-print.png}
    \caption{{\small The $(50\times 60\times 70)$-array 
    on the right was reconstructed by our algorithm 
    using the $\ch_{{\rm adj}}$ chisel 
    after we scrambled the synthetically constructed tensor on the left.
    }}
    \label{fig:adj-chisel}
\end{figure}

  
\subsection{The centroid chisel}
\label{sec:centroid}
If we discover face block decompositions in two faces 
of a 3-tensor simultaneously we get diagonal blocks. In particular, 
we can determine 
whether a tensor decomposes into diagonal blocks using 
the single chisel 
\begin{align}
    \label{eq:cen_chisel}
\ch_{\mathrm{cen}} ~& =~ \begin{bmatrix} 
    1 & -1 & 0 \\ 0 & 1 & -1
\end{bmatrix}.
\end{align}
For a given tensor $\Gamma$, the space 
$\Cen(\Gamma):=\Der(\ch_{{\rm cen}},\Gamma)$
has the structure of a commutative algebra, called the 
\textit{centroid} of $\Gamma$, whose product is 
again coordinatewise composition.
This ring and its properties have been studied in numerous ways
\citelist{\cite{JLP-Border-Rank}\cite{chatroom2}\cite{Genus2}\cite{Wilson:direct-decomp}}.
Figure~\ref{fig:cen-chisel} illustrates an
experiment to recover diagonal blocks.

\begin{figure}[!htbp]
    \centering
    \includegraphics{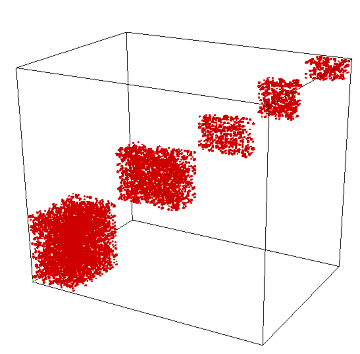}
    \hspace{2cm}
    \includegraphics{images/tensor-blocks-diag-recov-print.png}
    \caption{{\small The array with prescribed block structure on the 
    left was scrambled and then passed as input to 
    Algorithm~\ref{algo:detailed-stratify}
    with the $\ch_{{\rm cen}}$ chisel; the array on the right is the 
    output.}}
    \label{fig:cen-chisel}
\end{figure}


\subsection{Explaining the outcomes}
\label{sec:reasons}
Why do the chisels we have highlighted in this section 
reveal the sparsity patterns we have seen in these experiments?
Let us start with the adjoint chisel $\ch_{\mathrm{adj}}=[0,1,-1]$ and 
a $\ch_{\mathrm{adj}}$-derivation $(D_1, D_2, D_3)$, so that
\begin{align*}
0 \cdot \langle \Gamma \mid D_1 u_1, u_2, u_3\rangle + 1\cdot \langle \Gamma \mid u_1,D_2 u_2, u_3\rangle+(-1)\cdot \langle \Gamma\mid u_1, u_2, D_3 u_3\rangle &= 0.
\end{align*}
The first mode is not engaged so we insist that $D_1=0$ (Remark~\ref{rmk:engaged}).
We can use partial evaluation to remove mode 1 from consideration by setting
\begin{align*} 
u * v & := \langle \Gamma | -, u, v\rangle.
\end{align*}
Putting $X:=D_3$ and $X^{\circ}:=D_2$, the $\ch_{\mathrm{adj}}$-derivation condition for
$(0,X^{\circ},X)$ then becomes the familiar adjoint condition 
\begin{align*} 
(X^{\circ} u)*v  &= u*(X v)
\end{align*}
from linear algebra expressed for the bilinear map $*:U_2\times U_3\to U_1^*$ 
and this means
\begin{align}
    \label{eq:orth}
(\text{Null } X^{\circ})*(Xv)=X^{\circ} (\text{Null } X^{\circ})*v=0.
\end{align}
Because $(0,\lambda I,\lambda I)$ is a $\ch_{\mathrm{adj}}$-derivation, 
it follows that $(0,X^{\circ}-\lambda I, X-\lambda I)$ is also a 
$\ch_{\mathrm{adj}}$-derivation. 
Thus, to find and diagonalize a $[0,1,-1]$-derivation of $\Gamma$ is 
to find a sparsity pattern on the $\{2,3\}$-face of $\Gamma$.  
\smallskip

Next consider the centroid chisel $\ch_{\mathrm{cen}}$, and note 
that the space of $\ch_{\mathrm{cen}}$-derivations is  
the intersection of adjoint derivations on two or more faces of $\Gamma$.  
Thus the sparsity patterns for $\ch_{\mathrm{cen}}$ coincide with blocks 
along the diagonal of the array.
\smallskip 

The surfaces revealed by the universal chisel $\ch_{\mathrm{uni}}$ 
are the most surprising, but we have already seen in the discussion about the 
sphere in Example~\ref{ex:sphere-lab} how these continuous cases are 
indeed sparsity patterns for the universal chisel.

\subsection{Tucker and Higher-Order Singular Value Decompositions}
\label{sec:tucker}
We mentioned in the introduction that some known
tensor decompositions are special cases of chiseling. We  
explain how these decompositions are recovered by our methods.
\smallskip

Tensor decompositions of the form depicted in Figure~\ref{fig:structure-1}{\small (A)}  
are known as Tucker decompositions~\cite{Tucker}, and many efficient algorithms have 
been developed to find them~\cite{Bader-Kolda}.  
Proposition~\ref{prop:classify_chisels} describes the chisels 
needed to carry out Tucker decompositions on 3-tensors.
In particular, a full Tucker decomposition may be carried 
out using the identity matrix as a chisel, since
condition ~\eqref{eq:C-derivation} then
becomes
\begin{align*}
    \forall a\in[\ell], &&
\langle \Gamma | u_1,\ldots, D_a u_a, \ldots, u_{\ell}\rangle &=0.
\end{align*}
In words, the image of $D_a$ lies in the radical of $\langle \Gamma\mid-\rangle$,  
and the $a$-th Tucker decomposition is precisely that of the image and 
kernel of $D_a$. A demonstration of Tucker decomposition via 
chiseling can be found in the notebook~\cite{OpenDleto}*{Chiseling101}.
\smallskip

Focusing attention on a single axis $a$ of a multilinear 
map $\gamma:U_1\times \cdots \times U_{\ell}\to \K$ is functionally the same 
as treating $\gamma$ as a linear map 
\begin{align*}
L_a &: U_a\to (U_1\otimes \cdots U_{a-1}\otimes 
U_{a+1}\otimes \cdots \otimes U_{\ell})^*.
\end{align*}  
The nullspace of $L_a$ is the $a$-th radical and the derivation condition 
$L_a D_a=0$ identifies this nullspace.  More generally, the singular values 
of $L_a$ are known as higher-order singular values. Our methods find $D_a$  
such that $L_a D_a\approx 0$, so the $D_a$ eigenspaces coincide with the 
lowest HOSVD components of the tensor.
\smallskip

\textit{We reiterate that we are not proposing that our algorithm 
is a replacement for the highly optimized Tucker and HOSVD 
decomposition algorithms}, 
only that they can be viewed as special cases of chiseling.

\section{The theory of chisels}
\label{sec:theory}
The previous section focused on a small collection of chisels 
designed to reveal specific types of sparsity patterns.
These are also the chisels we have found most useful in our experience.
However, there is more to say about chiseling in general, and
our methods have the capability to work with chisels 
in more refined ways than the basic algorithms describe. 
In this section we elaborate on tensor chiseling.

\subsection{Ordering chisels and derivations}
\label{sec:orders}
For chisels $\ch$ and $\ch'$, each with $\ell$ columns,  
define the relation $\ch\leq \ch'$ if every row of $\ch$ is a linear combination 
of the rows of $\ch'$.  Write $\ch\equiv \ch'$ as chisels if 
$\ch\leq \ch'$ and $\ch'\leq \ch$, namely if $\ch$ and $\ch'$  
have the same row span.
Given $\ch\leq \ch'$, a tensor $\Gamma$, and 
$D\in \Der(\ch',\Gamma)$, linear combinations of the 
equations in condition~\eqref{eq:C-derivation} 
can be used to show that $D\in \Der(\ch,\Gamma)$, so we have
\begin{align}
\label{eq:galois}
    \ch \leq \ch' \qquad \Rightarrow \qquad \Der(\ch,\Gamma) \geq \Der(\ch',\Gamma).
\end{align}
Thus, the derivations of a chisel $\ch$ are determined by the row span pf $\ch$.
We learn more about condition~\eqref{eq:C-derivation} 
by considering its dual. Fix a set 
\begin{align*}
    \Omega &\subseteq
    \mathbb{K}^{d_1\times d_1}\times \cdots \times \mathbb{K}^{d_{\ell}\times d_{\ell}}
\end{align*}
of transverse operators. For an $\ell$-tensor $\Gamma$, put
\begin{align*}
    L(\Gamma,\Omega) &:= \{\, \ch \in \mathbb{K}^{c\times \ell} : 
    \text{condition~\eqref{eq:C-derivation} holds for all }(D_1,\ldots,D_{\ell})\in \Omega\,\}.
\end{align*}
Evidently, 
$\Omega\subseteq \Omega'$ implies $L(\Gamma,\Omega)\supseteq L(\Gamma,\Omega')$,
so by duality we have
\begin{align*}
    \ch\subseteq L(\Gamma,\Omega) &\iff \Omega \subseteq \Der(\ch,\Gamma) 
\end{align*}
Composing $\Der(-,\Gamma)$ and $L(\Gamma,-)$ produces closure operators on
the spaces of chisels and derivations:
\begin{align*}
    \lceil \ch\rceil_{\Gamma} &:= L(\Gamma,\Der(\ch,\Gamma))
    & 
    \lceil \Omega\rceil_{\Gamma} & := \Der(L(\Gamma,\Omega),\Gamma).
\end{align*}
We say that a chisel $\ch$ is \textit{closed} with respect to a tensor 
$\Gamma$ if the rows of $\ch$ span $\lceil \ch\rceil_{\Gamma}$. 
A fuller account of such orders and closures is given in 
\citelist{\cite{BW-tensor}\cite{FMW}}.

\subsection{Classifying 3-chisels}
\label{sec:classify}
Chisels with equal row spaces determine the same spaces of derivations, 
but we argue that a weaker equivalence on chisels is appropriate for the 
purpose of classification. Multiplying a chisel $\ch$ by an invertible 
diagonal matrix $\Sigma=\text{Diag}(\sigma_1,\ldots,\sigma_{\ell})$ rescales 
the columns of $\ch$, and 
\begin{align*} 
(D_1,\ldots,D_{\ell}) &\in\Der(\ch,\Gamma) \qquad \iff \qquad 
(\sigma_1 D_1,\ldots,\sigma_{\ell} D_{\ell}) \in \Der(\ch\Sigma,\Gamma).
\end{align*}
If a $\ch$-derivation $(D_1,\ldots,D_{\ell})$ detects a pattern 
$\Delta(\ch,\delta)$, then the rescaled derivation
$(\sigma_1 D_1,\ldots,\sigma_{\ell} D_{\ell})$ detects the pattern
$\Delta(\ch\Sigma,\Sigma^{-1}\delta)$.  
For the purpose of detecting patterns
it therefore suffices to work with full row rank matrices $\ch$ up to the transformation
$M\ch\Sigma$ for invertible $M$ and diagonal invertible $\Sigma$.  
Furthermore, composing $\Sigma$ with a permutation matrix only 
reindexes the modes of the chisel, which further simplifies their classification.
\smallskip

For 2-tensors (linear maps) 
the nonzero chisels are $[1~1]$, $[1~0]$, and 
$\left[ \begin{smallmatrix} 1 & 0 \\ 0 & 1 \end{smallmatrix} \right]$  
up to this equivalence.  
The next result gives the classification of chisels for 3-tensors.

\begin{prop} 
    \label{prop:classify_chisels}
    A nonzero 3-chisel is equivalent to exactly one of the following:
\begin{align*}
    \text{\small (Third Tucker chisel)}\quad
    \begin{bmatrix}
        1 & 0 & 0 \\
        0 & 1 & 0 \\
        0 & 0 & 1 
    \end{bmatrix} & &
    \text{\small (Centroid chisel)}\quad
    \begin{bmatrix}
        1 & 0 & 1\\
        0 & 1 & 1
    \end{bmatrix}\\
    \text{\small (Second Tucker chisel)}\quad
    \begin{bmatrix}
        0 & 1 & 0 \\ 
        0 & 0 & 1 
    \end{bmatrix} &&
    \text{(Adjoint chisel)}\quad
    \begin{bmatrix}
        0 & 1 & 1
    \end{bmatrix}\\ 
    \text{\small (First Tucker chisel)}\quad
    \begin{bmatrix}
        0 & 0 & 1 
    \end{bmatrix}
    & &
    \text{\small (Universal chisel)} \quad
    \begin{bmatrix}
        1 & 1 & 1 
    \end{bmatrix}
\end{align*}
\begin{gather*}
    \text{\small (First Tucker + Adjoint chisel)}\quad
    \begin{bmatrix}
        1 & 0 & 0 \\
        0 & 1 & 1
    \end{bmatrix}
\end{gather*}
\end{prop}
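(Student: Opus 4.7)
The plan is to translate the equivalence relation on chisels into a well-known classification problem, namely the classification of subspaces of $\K^3$ under a monomial group action. Since left multiplication by $M\in \GL_m$ together with deletion of zero rows amounts to taking a basis of the row space, the only invariant of $\ch$ that remains after these operations is the subspace $V = \mathrm{rowspace}(\ch)\subseteq \K^3$. Moreover, right multiplication by an invertible diagonal matrix $D$ corresponds to rescaling the three coordinates of $\K^3$, and permuting columns corresponds to permuting those coordinates. Hence two chisels are equivalent if and only if their row spaces lie in the same orbit under the monomial group $G := (\K^*)^3 \rtimes S_3$ acting on subspaces of $\K^3$. It therefore suffices to classify the nonzero $G$-orbits on the Grassmannian of subspaces of $\K^3$.

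I would organize the classification by $d := \dim V \in \{1,2,3\}$. For $d=3$ the only subspace is $\K^3$, which is represented by the identity matrix $I_3$, giving the Third Tucker chisel. For $d=1$, write $V = \K\cdot v$ with $v=(v_1,v_2,v_3)$. Using the $S_3$-action we may arrange the nonzero entries of $v$ to occur in the last coordinates (say), and using the $(\K^*)^3$-action we may rescale each nonzero entry to $1$. Hence a line in $\K^3$ is classified by the cardinality $|\mathrm{supp}(v)|\in\{1,2,3\}$, producing exactly the three representatives $[0,0,1]$, $[0,1,1]$, and $[1,1,1]$ (the First Tucker, Adjoint, and Universal chisels).

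For $d=2$, I would exploit duality. Every plane $V\subseteq \K^3$ is the kernel of some nonzero linear form $\varphi=(a,b,c)$, unique up to scalar. The $G$-action on planes corresponds under $V \mapsto \varphi$ to the $G$-action on lines in the dual space, which we just classified. So the three orbits of planes correspond to $|\mathrm{supp}(\varphi)|\in\{1,2,3\}$. For each case I would produce the explicit representative by computing a basis of $\ker \varphi$ and normalizing: $\varphi=(1,0,0)$ yields the basis $\{e_2,e_3\}$, i.e.\ the Second Tucker chisel; $\varphi=(1,1,0)$ yields $\{(1,-1,0),(0,0,1)\}$, and a diagonal rescaling of the second column produces the First Tucker + Adjoint chisel; and $\varphi=(1,1,1)$ yields the plane with basis $\{(1,-1,0),(1,0,-1)\}$, which after row reduction and rescaling the sign of the third column becomes the Centroid chisel $\left[\begin{smallmatrix} 1 & 0 & 1 \\ 0 & 1 & 1 \end{smallmatrix}\right]$.

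Finally, I would check that no two of these seven representatives are equivalent, which is immediate since the dimension $d$ of the row space is a $G$-invariant (separating the three blocks of sizes $3$, $3$, $1$), and within each block the $G$-orbits are distinguished by the support invariant $|\mathrm{supp}(v)|$ or $|\mathrm{supp}(\varphi)|$. The only genuinely nontrivial step is the verification for the Centroid case, which requires the observation that $\ker(x_1+x_2+x_3)$ is monomially equivalent to the row space of $\left[\begin{smallmatrix} 1 & 0 & 1 \\ 0 & 1 & 1 \end{smallmatrix}\right]$; I expect this to be the main (and essentially only) computational hurdle in the argument.
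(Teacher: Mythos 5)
Your proposal is correct, and it takes a genuinely different route from the paper. The paper's argument is purely computational: it puts $\ch$ in reduced row echelon form, removes zero rows, permutes zero columns to the left, observes that the only forms not already in the list are $\left[\begin{smallmatrix} 1 & 0 & a \\ 0 & 1 & b \end{smallmatrix}\right]$, and finishes with an explicit case split on whether $ab=0$ or $ab\neq 0$, producing the First-Tucker-plus-Adjoint chisel and (via an explicit $M\ch D$ computation) the Centroid chisel. Your proof instead reformulates the equivalence relation as the classification of subspaces $V\subseteq\K^3$ under the monomial group $(\K^*)^3\rtimes S_3$, stratifies by $\dim V$, handles $\dim V = 1$ by the support invariant, and then uses duality (a plane is the kernel of a linear form, unique up to scalar, and the induced action on the dual line is again monomial) to reduce the $\dim V = 2$ case to the already-solved $\dim V = 1$ case.

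Both arguments are sound. The paper's buys brevity and concreteness: the needed normal forms fall out of RREF with almost no conceptual overhead. Your approach buys structure: it exposes the underlying orbit problem, halves the casework through duality, and — unlike the paper's proof, which asserts the ``exactly one'' clause without comment — explicitly certifies pairwise inequivalence by exhibiting two complete invariants ($\dim V$ and the support cardinality of $v$ or of $\varphi$, as appropriate). One small imprecision in your write-up: in the $\varphi=(1,1,0)$ case, rescaling the second column of $\left[\begin{smallmatrix} 1 & -1 & 0 \\ 0 & 0 & 1 \end{smallmatrix}\right]$ yields $\left[\begin{smallmatrix} 1 & 1 & 0 \\ 0 & 0 & 1 \end{smallmatrix}\right]$, which still needs a column permutation (and row swap) to become the listed representative $\left[\begin{smallmatrix} 1 & 0 & 0 \\ 0 & 1 & 1 \end{smallmatrix}\right]$; you should say so rather than attributing the match to the rescaling alone.
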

\begin{proof}
First, compute the reduced row echelon form of $\ch$ and remove all zero rows.
Next, permute the columns so that any zero columns appear to the left. The 
resulting matrices not explicitly given in the classification have the form 
\begin{align*}
\begin{bmatrix}
1 & 0 & a\\ 0 & 1 & b 
\end{bmatrix}.
\end{align*}
If $ab=0$, this matrix is equivalent to 
$\left[\begin{smallmatrix} 1 & 0 & 0 \\ 0 & 1 & 1 \end{smallmatrix}\right]$.
If $ab\neq 0$, the matrix is equivalent to
    \begin{align*}
        \begin{bmatrix}
            1/a & 0 \\ 0 & 1/b 
        \end{bmatrix}
        \begin{bmatrix} 
            1 & 0 & a \\ 
            0 & 1 & b 
        \end{bmatrix}
        \begin{bmatrix}
            a & 0 & 0 \\ 0 & b & 0 \\ 0 & 0 & 1
        \end{bmatrix}
        =
        \begin{bmatrix}
            1 & 0 & 1 \\ 
            0 & 1 & 1
        \end{bmatrix}.
        \qquad 
    \end{align*}
    This completes the classification.
\end{proof}

\begin{ex}
Consider the 3-tensor 
$\mathbb{K}^{a\times b}\times \mathbb{K}^{b\times c}\to \mathbb{K}^{a\times c}$
determined by rectangular matrix multiplication.
By direct calculation, the space of derivations for the centroid chisel 
is 1-dimensional (just the field of definition). 
The derivations of the adjoint chisels $[1~1~0]$, $[1~0~1]$, 
and $[0~1~1]$ are, respectively, $\mathbb{K}^{a\times a}$, $\mathbb{K}^{b\times b}$, 
and $\mathbb{K}^{c\times c}$.  The space of
derivations of the universal chisel $[1~1~1]$ 
is isomorphic to $\mathbb{K}^{a^2+b^2+c^2-1}$, and contains all of the others.  
We remark that the centroid, adjoint, and universal chisels are, in the sense of 
Section~\ref{sec:orders}, 
all closed for this tensor.  
\end{ex}

\subsection{Higher valence chisels}  
It suffices, when working with tensors of valence $\ell$, 
to consider just the reduced row echelon
forms of the $m\times \ell$ chisel matrices $\ch$: 
\begin{align*}
\ch &= \begin{bmatrix} I_r & \ch' \\ 0 & 0\end{bmatrix}.
\end{align*}
Hence, there are at most $r\times (\ell-r)$ parameters to describe 
chisels of rank $r$, corresponding to the different choices of matrix $\ch'$.  
Furthermore, if $\Sigma$ is an invertible $r\times r$ diagonal matrix, 
and $\Sigma'$ a diagonal $(\ell-r)\times (\ell-r)$ matrix, 
then 
\begin{align*} 
\begin{bmatrix}\Sigma^{-1} & 0 \\ 0 & I\end{bmatrix} 
\begin{bmatrix} I_r & \ch'\\ 0 & 0 \end{bmatrix} 
\begin{bmatrix} \Sigma & 0 \\ 0 & \Sigma' \end{bmatrix}
&=\begin{bmatrix} I_r & \Sigma^{-1}\ch' \Sigma' \\ 0 & 0 \end{bmatrix}.
\end{align*}
Hence, up to our equivalence, there are $(r-1)(\ell-r-1)$ 
free parameters in $\ch'$,  
which is positive when $1<r<\ell-1$.
Thus, when $\ell\geq 4$, the number of inequivalent chisels is at 
least the size of the field $\K$, which may be infinite or uncountable.

For instance, if we define, for $a\in \K\setminus\{0\}$, the 
family of 4-chisels
\begin{align*}
    \ch(a) & = \begin{bmatrix} 1 & 0 & 1 & a \\ 0 & 1 & 1 & 1 \end{bmatrix},
\end{align*}
then $\ch(a)$ is equivalent to $\ch(b)$ if, and only if, $a=b$. Hence, over infinite fields 
there are infinitely many 
non-equivalent chisels of valence $\ell>3$.  

\subsection{Variations}
\label{sec:advanced}
One may wish to impose restrictions on the $\ch$-derivations 
one uses to explore the structure of a tensor $\Gamma$. 
The tensor $\Gamma$ may possess internal symmetries 
that impose constraints among the transverse operators 
$(D_1,\ldots,D_{\ell})$, or it may simply be that our application 
requires that we only work with symmetric or Hermitian operators.
This can be achieved by specifying a suitable 
subspace $\Omega\leq \K^{d_1\times d_1}\times \cdots \times \K^{d_{\ell}\times d_{\ell}}$
of transverse operators and then considering the subspace 
\begin{align*}
    \Der_{\Omega}(\ch,\Gamma) &:= \Der(\ch,\Gamma)\cap \Omega.
\end{align*}
of derivations that lie within $\Omega$. Since 
the eigenspaces of a symmetric matrix are orthogonal,
imposing the constraint that $D_a$ is symmetric 
produces orthogonal decompositions along the $a^{\mathrm{th}}$ 
mode of $\Gamma$.




\subsection{The algebraic structure \boldmath$\Der_{\Omega}(\ch,\Gamma)$}
\label{sec:algebra-structure}
Singular and eigenvalue theories possess uniqueness results that 
ensure reproducibility. On first inspection chiseling may appear to 
lack any such uniqueness guarantees. For instance, it may concern the 
reader that we are happy to use  
any (nonscalar) derivation $\delta$ in $\Der(\ch,\Gamma)$ we can get our hands on.  
However, the sets $\Der(\ch,\Gamma)$ generally possess an algebraic structure 
that explains why generic choices produce the same sparsity patterns. 

For example, if a chisel $\ch$ consists entirely of $0$'s and $1$'s, 
then $\Der(\ch,\Gamma)$ is closed under
the Lie algebra product $[D,E]:=(D_a E_a - E_a D_a : 0\leq a\leq \ell)$.  
The same is true of an arbitrary $\ch$ in
projective coordinates (where $u_a$ in \eqref{eq:C-derivation} is replaced by $\K u_a$). 
Derivations for $(\Omega,\ch)$ pairs can admit  algebraic structures, such as 
associative $*$-algebras \cite{BW-tensor}, Jordan algebras
\citelist{\cite{MM:star-alge-blocks} \cite{Wilson:unique} \cite{Wilson:central}}, or 
commutative rings  \citelist{\cite{chatroom2} \cite{Genus2} \cite{JLP-Border-Rank}
\cite{Wilson:direct-decomp}}. Conditions on the possible algebraic products are 
provided in~\cite[Theorem D]{FMW}.

\section{Universality \& uniqueness}
\label{sec:uniqueness}
We have alluded throughout to uniqueness properties 
of the tensor decompositions we discover. In this section 
we prove such uniqueness claims for the chisel 
\begin{align}
    \label{eq:universal-chisel}
    \ch_{{\rm uni}} &= \left[
        \begin{array}{cccc}
            1 & 1 & \cdots & 1 
        \end{array}
    \right].
\end{align}
The proof uses the fact that $\Der(\ch_{{\rm uni}},\Gamma)$ has the structure of a Lie algebra.
Similar uniqueness results hold for other chisels $\ch$ for which $\Der(\ch,\Gamma)$
has a nice algebraic structure---the chisels in 
Sections~\ref{sec:universal}---\ref{sec:centroid} 
are of this type (cf.~Section~\ref{sec:algebra-structure}).

\subsection{The universal chisel}
We begin by explaining why we use $\ch_{\mathrm{uni}}$ 
as the default chisel, and why we refer to it as `universal'.
The following result, a special case of \cite{FMW}*{Theorem~1}, 
asserts that $\ch_{{\rm uni}}$ is the least restrictive chisel: if $\Gamma$ has 
only scalar $\ch_{{\rm uni}}$-derivations, then it has only scalar 
$\ch$-derivations for \textit{any} chisel $\ch$. To coin a phrase, 
there is one chisel to rule them all.

\begin{prop}
\label{prop:universal-chisel}
Let $\Gamma$ be a tensor defined over an infinite field $\K$. 
For any chisel $\ch$  
and any $\ch$-derivation $(D_1,\ldots,D_{\ell})$ of $\Gamma$,
there are nonzero scalars $(\lambda_1,\ldots,\lambda_{\ell})$ 
such that $(\lambda_1 D_1,\ldots,\lambda_{\ell}D_{\ell})$ 
is a $\ch_{{\rm uni}}$-derivation of $\Gamma$. 
\end{prop}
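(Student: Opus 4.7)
The plan is to recast the claim as a statement about linear relations among certain multilinear forms, and then to close by invoking the standard fact that a vector space over an infinite field cannot be a union of finitely many proper subspaces.

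First I would fix a $\ch$-derivation $(D_1,\ldots,D_\ell)$ of $t$ and, for each $a\in[\ell]$, introduce the multilinear form
\[
f_a(u_1,\ldots,u_\ell) \;:=\; \langle t\mid u_1,\ldots, D_a u_a,\ldots,u_\ell\rangle.
\]
The system~\eqref{eq:C-derivation} says precisely that for each row $j$ of $\ch$ we have $\sum_a \ch_{ja} f_a = 0$. Setting
\[
K \;:=\; \bigl\{\,\lambda\in\K^\ell \;:\; \textstyle\sum_a \lambda_a f_a = 0 \,\bigr\},
\]
this means every row of $\ch$ lies in $K$. Moreover, unpacking the $\ch_{\mathrm{uni}}$-derivation condition shows that $(\lambda_1 D_1,\ldots,\lambda_\ell D_\ell)$ is a $\ch_{\mathrm{uni}}$-derivation of $t$ if and only if $\lambda\in K$. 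So the task reduces to producing a vector in $K$ with \emph{all} coordinates nonzero.

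Next I would show that for each $a\in[\ell]$ the coordinate subspace $K_a := \{\lambda\in K : \lambda_a = 0\}$ is a proper subspace of $K$. There are two cases. If column $a$ of $\ch$ is identically zero, then by Remark~\ref{rmk:engaged} we have $D_a = 0$, hence $f_a = 0$, and the standard basis vector $e_a$ lies in $K\setminus K_a$. Otherwise some entry $\ch_{ja}$ of column $a$ is nonzero, and the $j$th row of $\ch$ is itself an element of $K$ whose $a$th coordinate is nonzero, so again $K_a\subsetneq K$.

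Finally, because $\K$ is infinite, the vector space $K$ cannot be written as a union of the finitely many proper subspaces $K_1,\ldots,K_\ell$. Any $\lambda\in K\setminus\bigcup_a K_a$ has all coordinates nonzero and provides the scaling claimed in the statement. In my view the main subtlety is not the covering argument itself but the bookkeeping: one must verify that the dichotomy ``column $a$ zero $\Rightarrow f_a=0$'' versus ``column $a$ nonzero $\Rightarrow$ some row of $\ch$ lies in $K\setminus K_a$'' really does cover every axis, so that the two-case argument forces no coordinate of $\lambda$ to vanish.
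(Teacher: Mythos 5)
Your proof is correct and is essentially the paper's argument: both read the rows of $\ch$ as linear relations among the contracted forms $f_a$ and conclude by the fact that a vector space over an infinite field is not covered by finitely many proper subspaces. The paper runs the covering argument in the row-coefficient space $\K^m$, using the hyperplanes $\{v : (v\ch)_a = 0\}$ for engaged $a$ and then taking $\lambda$ to be the coordinates of a well-chosen $v\ch$; you instead run it directly inside the relation space $K\subseteq\K^{\ell}$, which has the minor advantage of folding the disengaged-axis case (where $f_a=0$, hence $e_a\in K$) into the same framework rather than invoking the $D_a=0$ convention separately, but the underlying idea is identical.
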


\begin{proof}
    Recall from Remark~\ref{rmk:engaged} the convention that $D_b=0$ whenever column $b$ of $\ch$ 
    is zero, so we may focus attention on the subset $A\subseteq [\ell]$ 
    upon which $\ch$ is engaged. 
    For $v\in\mathbb{K}^m$ and $a\in A$
    define $\tau_{av}:= (v\ch)_a=\sum_j v(j) \ch_{ia}$.
    For each $a\in [\ell]$, there exists $j$ with 
    $\ch_{ja}\neq 0$. 
    Hence, $0=\tau_{av}=\sum_j v(j)\ch_{ja}$ is a nonzero homogeneous 
    system of linear equations and its solution space is a hyperplane.  
    Observe that $\tau_{av}\neq 0$ for some $a\in[\ell]$. (Otherwise,
    $\K^m$ would be a union of finitely many hyperplanes, 
    which is impossible since $\K$ is infinite.)
    Hence, there exists $v\in \K^m$ such that 
    $(\tau_{av} : a\in [\ell])$ has no zero entry.  
    Since $D$ is a $\ch$-derivation and therefore also a $v\ch$-derviation, 
    it follows that $(\tau_{av}^{-1} D_a :a\in [\ell])$ is a 
    $\ch_{{\rm uni}}$-derivation.
\end{proof}

\subsection{Uniqueness}
The following result provides the theoretical 
foundation for our uniqueness claims for the chisel $\ch_{{\rm uni}}$.  

\begin{thm}
\label{thm:unique}
    For a tensor $\Gamma$ defined over $\mathbb{C}$, the following hold:
    \begin{enumerate}
        \item[(a)] Let $(D_a : a\in [\ell])$ be a 
        $\ch_{{\rm uni}}$-derivation of $\Gamma$. For each $a\in[\ell]$, 
        the operator $D_a$ can be decomposed as $D_a=S_a+N_a$, where  
        $S_a$ is diagonalizable and $N_a$ is nilpotent, 
        $S_a$ and $N_a$ commute,
        and any two such decompositions
        are conjugate under an invertible operator on $U_a$.
        Furthermore, both 
        \begin{align*}
        (S_a : a\in [\ell]) & \qquad \text{and} \qquad 
        (N_a : a\in [\ell]) 
        \end{align*}
        are $\ch_{{\rm uni}}$-derivations of $\Gamma$.

        \item[(b)] Let $(D_a : a\in [\ell])$ 
        and $(E_a : a\in [\ell])$ be $\ch_{{\rm uni}}$-derivations 
        of $\Gamma$ such that all $D_a$ and $E_a$ are diagonalizable.  
        For each $a\in [\ell]$, there are invertible matrices 
        $X_a$ such that 
        $(X_a^{-1}E_aX_a : a\in [\ell])$ is a $\ch_{{\rm uni}}$-derivation
        of $\Gamma$, and
        $D_a$ and $X_a^{-1} E_a X_a$ are 
        simultaneously diagonalizable.  
    \end{enumerate}
\end{thm}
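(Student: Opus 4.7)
The strategy is to interpret $\mathfrak{g} := \Der(\ch_{\mathrm{uni}}, t)$ as the Lie algebra of an algebraic group, so that the classical structure theory of algebraic groups over $\mathbb{C}$ applies. Set
\[
    G = \{(X_1,\ldots,X_\ell) \in GL(U_1) \times \cdots \times GL(U_\ell) ~:~ \langle t \mid X_1 u_1,\ldots, X_\ell u_\ell\rangle = \langle t \mid u_1,\ldots, u_\ell\rangle \text{ for all } u_a\}.
\]
This is a Zariski-closed subgroup of the ambient general linear group, and differentiating the stabilizer condition at the identity recovers precisely the $\ch_{\mathrm{uni}}$-derivation equation~\eqref{eq:C-derivation}. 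Thus $\mathfrak{g} = \mathrm{Lie}(G)$ is an \emph{algebraic} Lie subalgebra of $\mathfrak{gl}(U_1) \oplus \cdots \oplus \mathfrak{gl}(U_\ell)$, refining the bracket-closure observation of Section~\ref{sec:universal}.

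For part (a), take $D = (D_a) \in \mathfrak{g}$. Each $D_a$ has its classical additive Jordan--Chevalley decomposition $D_a = S_a + N_a$ with $S_a$ diagonalizable, $N_a$ nilpotent, and $[S_a,N_a]=0$; under the commutativity requirement this decomposition is unique, and the ``unique up to conjugation in $\End U_a$'' formulation is just the usual similarity-of-Jordan-form statement applied axis by axis. The non-trivial content is that $S := (S_a)$ and $N := (N_a)$ again lie in $\mathfrak{g}$. For this I would invoke Chevalley's theorem that the Lie algebra of a complex algebraic group is stable under the Jordan decomposition computed in the ambient $\mathfrak{gl}$ (see, e.g., Borel, \emph{Linear Algebraic Groups}, I.\S4). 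Applied componentwise to $\mathfrak{g} \subseteq \mathfrak{gl}(U_1)\oplus\cdots\oplus\mathfrak{gl}(U_\ell)$, this gives $S, N \in \mathrm{Lie}(G)=\mathfrak{g}$, so both are $\ch_{\mathrm{uni}}$-derivations of $t$.

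For part (b), let $D, E \in \mathfrak{g}$ be componentwise diagonalizable; equivalently, they are semisimple elements of $\mathfrak{g}$. Since $\mathfrak{g}$ is algebraic, each semisimple element lies in a maximal torus of the identity component $G^\circ$, say $D \in \mathrm{Lie}(T_D)$ and $E \in \mathrm{Lie}(T_E)$. Borel's conjugacy theorem for maximal tori in a connected complex algebraic group furnishes $g = (g_1,\ldots,g_\ell) \in G^\circ$ with $g T_E g^{-1} = T_D$, and passing to Lie algebras puts $\mathrm{Ad}(g)(E)$ in the abelian Lie algebra $\mathrm{Lie}(T_D)$ together with $D$. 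Every element of $\mathrm{Lie}(T_D)$ is simultaneously diagonalizable in each axis (its components pairwise commute and are individually diagonalizable). Setting $X_a := g_a^{-1}$, the matrices $X_a^{-1} E_a X_a = g_a E_a g_a^{-1}$ and $D_a$ are therefore simultaneously diagonalizable, as required.

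The main obstacle is that $\mathfrak{g}$ need not be semisimple, or even reductive, so the standard Cartan-subalgebra toolkit of semisimple Lie theory is not directly available. Fortunately, both Chevalley's preservation of Jordan decomposition and Borel's conjugacy of maximal tori hold for an arbitrary connected complex algebraic group, so the ``hard'' inputs survive without a reductivity hypothesis. What remains is the routine but essential verification that the stabilizer $G$ is Zariski-closed and that $\mathrm{Lie}(G)$ genuinely coincides with the derivation space $\mathfrak{g}$.
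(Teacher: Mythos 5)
Your proposal is correct, and it takes a genuinely different route from the paper's. The paper works entirely at the Lie-algebra level: it observes that $\Der(\ch_{\mathrm{uni}},t)$ is closed under brackets and linear combinations and then cites Jacobson for two facts about abstract Lie algebras over $\mathbb{C}$ --- closure under Jordan--Chevalley decomposition (Ch.~III, Thm.~16, essentially the result on derivations of nonassociative algebras) for part (a), and conjugacy of Cartan subalgebras (Ch.~IX, Thm.~3) for part (b). You instead lift everything to the algebraic \emph{group} $G=\Isom(t)$ of simultaneous isometries, check $\Der(\ch_{\mathrm{uni}},t)=\mathrm{Lie}(G)$ (using smoothness in characteristic $0$, i.e.\ Cartier), and then invoke the analogous group-level theorems: Chevalley's closure of $\mathrm{Lie}(G)$ under Jordan decomposition in $\mathfrak{gl}$ for (a), and Borel's conjugacy of maximal tori (together with the fact that every semisimple element of $\mathrm{Lie}(G)$ is tangent to some maximal torus) for (b).

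What the two approaches buy: the paper's purely Lie-algebraic route stays closest to the object the algorithms actually compute, but it quietly relies on $\Der(\ch_{\mathrm{uni}},t)$ being an \emph{algebraic} Lie subalgebra of $\mathfrak{gl}$ --- a general Lie subalgebra of $\mathfrak{gl}(V)$ is not closed under Jordan decomposition, so the Jacobson citation is doing real work that is not spelled out. Your route makes that step explicit by exhibiting $\mathfrak{g}$ as $\mathrm{Lie}(G)$, which both justifies the Jordan-closure and has a second payoff: the conjugating element $g\in G^\circ$ automatically preserves the tensor $t$, so $\mathrm{Ad}(g)(E)$ is again a $\ch_{\mathrm{uni}}$-derivation and the joint null pattern is preserved. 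That point is essential for the Corollary that follows the theorem, and your formulation makes it transparent, whereas in the paper's phrasing it is not immediately clear which Lie algebra the Cartan subalgebras $H_a$, $J_a$ live in or why conjugating by the $X_a$ respects the derivation equations across axes. Your correct remark that reductivity is \emph{not} needed for either Chevalley's or Borel's theorem is also worth retaining, since $\Der(\ch_{\mathrm{uni}},t)$ is typically far from reductive.

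One small point to tighten: you should note that the fact that every semisimple element of $\mathrm{Lie}(G^\circ)$ lies in $\mathrm{Lie}(T)$ for some maximal torus $T$ is itself a theorem (e.g.\ Humphreys, \emph{Linear Algebraic Groups}, \S22.2, or Borel, Thm.~11.8), distinct from mere conjugacy of tori; as written it is stated as though it were immediate.
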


\begin{proof}
    Given $\ch_{{\rm uni}}$-derivations 
    $(D_a : a\in [\ell])$ and $(E_a : a\in [\ell])$ 
    and $\lambda\in\mathbb{C}$,  
    \begin{align*}
        (D_a+\lambda E_e : a\in [\ell]) && \mbox{and}&& (D_a E_a-E_aD_a : a\in [\ell])
    \end{align*}
    are both $\ch_{{\rm uni}}$-derivations. Thus, the set $\mathrm{Der}(\ch_{{\rm uni}},\Gamma)$ of all 
    $\ch$-derivations of $\Gamma$ is a $\C$-Lie algebra, and (a)
    follows from standard theory~\cite[Chapter III, Theorem 16]{Jac}.

    Each $\C$-Lie algebra has a maximal (\textit{Cartan}) subalgebra of 
    simultaneously diagonalizable elements.  
    Every diagonalizable element lies in a Cartan subalgebra, 
    and  two Cartan subalgebras are conjugate~\cite[Chapter IX, Theorem 3]{Jac}.
    In (b), for each $a\in[\ell]$, let $H_a$   
    be a Cartan subalgebra containing
    $D_a$, and let $J_a$ be a Cartan subalgebra containing $E_a$.
    Let $X_a$ be an invertible matrix conjugating $J_a$ to $H_a$. Then 
    $D_a$ and $X_a^{-1}E_aX_a$ lie in a common Cartan subalgebra, so they 
    are simultaneously diagonalizable. 
\end{proof}

Theorem~\ref{thm:unique} identifies the optimal sparsity pattern determined 
by $\ch_{{\rm uni}}$. In particular, 
a single random derivation usually provides all of the information we need:

\begin{coro}
    Any two $\ch_{{\rm uni}}$-derivations of a tensor $\Gamma$
    define a unique joint sparsity pattern.
\end{coro}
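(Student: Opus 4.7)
The plan is to leverage Theorem~\ref{thm:unique} to reduce everything to a joint eigenspace decomposition, and then invoke Theorem~\ref{thm:basic-chisel} twice to see the resulting null pattern.

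First, given two $\ch_{\rm uni}$-derivations $D=(D_a : a\in[\ell])$ and $E=(E_a : a\in[\ell])$ of $t$, I would apply Theorem~\ref{thm:unique}(a) to each to replace them by their semisimple parts, since the nilpotent parts produce trivial null patterns and the semisimple parts are themselves $\ch_{\rm uni}$-derivations. Thus I may assume each $D_a$ and each $E_a$ is diagonalizable. Theorem~\ref{thm:unique}(b) then supplies invertible operators $X_a$ on $U_a$ so that $D_a$ and $X_a^{-1}E_aX_a$ are simultaneously diagonalizable. Replacing $E$ by its conjugate, which is still a $\ch_{\rm uni}$-derivation of $t$ (after a compatible change of basis along the axes), I may assume the pair $(D_a,E_a)$ admits a common eigenbasis on each axis.

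Next, for each $a\in[\ell]$, I would form the joint eigenspace decomposition
\begin{align*}
U_a &= \bigoplus_{(i,j)} X_a(i,j), && X_a(i,j) := \{\,u\in U_a : D_a u=\delta_a^D(i) u,\ E_a u=\delta_a^E(j) u\,\}.
\end{align*}
These joint eigenspaces are intrinsic subspaces of $U_a$, so the decomposition does not depend on the chosen common eigenbasis. Applying Theorem~\ref{thm:basic-chisel} to $D$ with the decomposition $U_a=\bigoplus_{(i,j)} X_a(i,j)$ (a refinement of the eigenspace decomposition of $D_a$), the joint decomposition exhibits the null pattern $\Delta(\ch_{\rm uni},\delta^D)$; applying it to $E$, it exhibits $\Delta(\ch_{\rm uni},\delta^E)$. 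The joint null pattern is then
\begin{align*}
\Delta(\ch_{\rm uni},\delta^D)\,\cap\,\Delta(\ch_{\rm uni},\delta^E),
\end{align*}
read off by the tuples of joint eigenvalue indices that satisfy both universal chisel conditions.

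Finally, to verify uniqueness, I would note that the joint eigenvalue data $(\delta_a^D(i),\delta_a^E(j))$ and the intrinsic subspaces $X_a(i,j)$ depend only on $D$ and $E$, not on the conjugating operators $X_a$ produced by Theorem~\ref{thm:unique}(b); different choices of simultaneous diagonalization only permute the labels $(i,j)$, so the pattern is unique up to the equivalence of null patterns defined in Section~\ref{sec:null-patterns}. The main subtlety, which I would address carefully, is exactly this point: the conjugation in Theorem~\ref{thm:unique}(b) is not canonical, but the joint eigenspace decomposition it produces is---and this is what forces the joint null pattern itself to be an invariant of the pair $(D,E)$.
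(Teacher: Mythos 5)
Your proof follows the paper's argument nearly verbatim: pass to semisimple parts via Theorem~\ref{thm:unique}(a), use part~(b) to simultaneously diagonalize (the paper phrases this as placing both in a common Cartan subalgebra), and read the joint null pattern off the common eigenspaces. Your closing observation that the conjugating operators from part~(b) are not canonical flags a genuine subtlety that the paper's terse three-sentence proof passes over in silence, but the overall route is the same.
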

\begin{proof}
    Given two $\ch_{{\rm uni}}$-derivations, select their semisimple components 
    from Theorem~\ref{thm:unique}(a). Next, use Theorem~\ref{thm:unique}(b) to 
    conjugate those semisimple parts in a common Cartan subalgebra.
    They now have common eigenspaces, and hence define 
    a common sparsity pattern. 
\end{proof}

Over $\mathbb{R}$ the situation is the same except that we occasionally need
$2\times 2$ blocks with complex eigenvalues, so 
the resulting sparsity patterns may be larger.

\subsection{Ordering the blocks}
If $\{X_i : i\in [k]\}$ is the set of eigenspaces of a 
complex linear operator, the corresponding eigenvalues are 
returned with an arbitrary ordering. This means  
that every sparsity pattern we produce is only unique up to reording.     
One often has to contend with such issues when seeking sparsity patterns, 
but our algorithms can produce so many blocks that it 
behooves us to exert some control when we can.

\begin{prop}
\label{prop:unique-order}
    If the space of $\ch$-derivations of a tensor has dimension 
    exactly one more than 
    that of the null space of $\ch$, and if there is a non-scalar $\ch$-derivation 
    $D$ with all real eigenvalues, then 
    there is a well-defined unique canonical 
    ordering of the associated sparsity pattern.
\end{prop}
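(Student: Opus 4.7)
The plan is to show that the dimension hypothesis forces every non-scalar $\ch$-derivation to share the same eigenspace decomposition on each axis (up to scalar shifts), and that the reality hypothesis then supplies a canonical total order on each axis via the real number line.

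First, I would use the dimension hypothesis together with the discussion of scalar derivations in Section~\ref{sec:derivations}. Since $\operatorname{scl}(\ch)$ embeds in $\Der(\ch,t)$ with image of dimension $\dim\ker\ch$, and the whole space has dimension $\dim\ker\ch+1$, every $\ch$-derivation $D'$ of $t$ can be written as $D'=\alpha D+S$ for some $\alpha\in\K$ and some scalar derivation $S=(s_1I_{U_1},\ldots,s_\ell I_{U_\ell})$ with $(s_1,\ldots,s_\ell)\in\ker\ch$. In other words, all non-scalar $\ch$-derivations are parallel modulo scalars.

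Next, apply Theorem~\ref{thm:basic-chisel} to $D$. Because $D$ is non-scalar with all real eigenvalues, it produces genuine decompositions $U_a=X_a(1)\oplus\cdots\oplus X_a(k_a)$ of each axis, with distinct real eigenvalues $\delta_a(i_a)$ that exhibit the null pattern $\Delta(\ch,\delta)$. I would order the blocks so that $\delta_a(1)<\delta_a(2)<\cdots<\delta_a(k_a)$ on each axis, a total order that is canonical because it uses only the order on $\mathbb{R}$.

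Then I would verify that this ordering is intrinsic to $t$ by considering any other non-scalar $\ch$-derivation $D'=\alpha D+S$ whose eigenvalues are also all real. Realness of the eigenvalues of $D'_a=\alpha D_a+s_aI$ forces $\alpha\in\mathbb{R}^\times$, and $D'_a$ shares its eigenspaces with $D_a$, with eigenvalues $\delta'_a(i_a)=\alpha\delta_a(i_a)+s_a$. A direct substitution, using $(s_1,\ldots,s_\ell)\in\ker\ch$, gives
\[
\sum_{a=1}^\ell \ch_{ja}\delta'_a(i_a)=\alpha\sum_{a=1}^\ell \ch_{ja}\delta_a(i_a),
\]
so $\Delta(\ch,\delta')=\Delta(\ch,\delta)$ as subsets of $\prod_a[k_a]$. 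Sorting $\delta'_a$ yields the same order as $\delta_a$ when $\alpha>0$ and the reverse when $\alpha<0$, but the sign is the same on all axes, so the induced ordering on the null pattern differs from the one given by $D$ only by a single global reflection.

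The main obstacle is this last ambiguity between $D$ and $-D$: it permutes the indices on each axis simultaneously and produces a reverse labeling of the same null pattern. The resolution is that this reversal is a canonical symmetry of the ordered null pattern, so fixing any one-sided convention (for instance, smallest eigenvalue labeled $1$) produces a unique ordering of $\Delta(\ch,\delta)$. I would close by remarking that it is precisely the combination of the one-dimensional non-scalar part of $\Der(\ch,t)$ (which locks in the eigenspaces) and the reality of the spectrum (which linearly orders them) that makes the ordering unique; either condition alone is insufficient.
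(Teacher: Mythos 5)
Your proposal follows the same route as the paper --- use the dimension hypothesis to force every non-scalar $\ch$-derivation to share the eigenspaces of $D$, then invoke realness of the spectrum to order the blocks --- but you are more careful at one point where the paper is actually terse to the point of imprecision. The paper asserts that any other non-scalar derivation $E$ has eigenvalues of the form $\delta_a(i_a)+\beta_a$ (a pure shift on each axis), which tacitly sets the scaling coefficient $\alpha$ in $E=\alpha D + S$ equal to $1$. You correctly observe that the eigenvalues are in fact $\alpha\delta_a(i_a)+s_a$ with $\alpha$ forced to be a nonzero real, and you note the resulting sign ambiguity: when $\alpha<0$ the ordering on every axis reverses simultaneously. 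The paper's proof simply concludes that ``the ordering of the eigenspaces inherited from eigenvalues is invariant under every choice of non-scalar derivation'' without addressing this reversal, so your version is the more complete of the two. One nitpick: your concluding sentence gestures at fixing ``any one-sided convention (for instance, smallest eigenvalue labeled $1$),'' but that convention is already what you applied to both $D$ and $-D$ and they still produce opposite labelings; the honest conclusion is that the ordering is unique up to a single global reversal, and one needs some further normalization of the sign of $D$ (not of the labels) to pin it down completely. Despite that small soft spot, your argument is sound and, if anything, patches a gap in the paper's own proof.
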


\begin{proof}
    If $D=(D_a : a\in [\ell])$ is such a $\ch$-derivation, then
    each $D_a$ can be taken to be 
    a diagonal matrix with ordered eigenvalues 
    $\delta_{a}(1),\ldots, \delta_{a}(k_a)$.
    As the space of $\ch$-derivations is one-dimensional over the scalar 
    derivations, it follows that any non-scalar derivation 
    $E_a$ has eigenvalues $\{\delta_{a}(1)+\beta_a,
    \ldots, \delta_{a}(k_a)+\beta_a\}$ for some scalar $\beta_a$.
    In particular, the ordering of the eigenspaces inherited from 
    eigenvalues is invariant under every choice of non-scalar derivation.
\end{proof}

The conditions of Proposition~\ref{prop:unique-order} are usually met 
for a sufficiently generic tensor possessing a nontrivial sparsity pattern $\Delta(\ch,\delta)$. 
Even if they are not,
however, one can appeal to more subtle ordering tools from Lie theory. 
Avoiding a technical digression, one idea is to order the 
eigenspaces along a composition series and then appeal to highest weight 
vectors. This approach would handle all but the abelian Lie algebras 
whose dimension over the scalars is greater than 1; no canonical order is known for 
such Lie algebras.

\section{Summary}
\label{sec:conclusion}
We have introduced the concept of \emph{sparsity patterns}, which capture the outputs
of many of the tensor clustering algorithms in current use. We also introduced 
\textit{chisels} to parameterize classes of computable sparsity patterns, and showed 
that these classes include new patterns not known to be detectable by existing methods. 
We proved that, over infinite fields, one particular chisel is universal among all chisels.
We also described an efficient algorithm to detect the existence of sparsity patterns in 
tensors, and proved that the resulting patterns are unique.

\subsection*{Acknowledgments}

This research began as a challenge from colleagues in applied mathematics to 
adapt algebraic algorithms designed for fields with exact arithmetic 
to produce practicable tools for computing with tensors over fields with approximate 
operations. While these colleagues remain unnamed, we thank them both for their 
critiques and for their encouragement.
We also thank the Departments of Mathematics at  Bucknell \& Cornell University for jointly 
hosting our team meetings that led to the discovery of the original chisel method.  
We thank Nick Vannieuwenhoven for engaging discussions on optimizing chisel algorithms 
in September 2024. Finally, we thank the anonymous referees for their careful reading 
and helpful suggestions that led to significant improvements in the paper.

The work 
was initially supported by the Simons Foundation.
P.A. Brooksbank was supported by 
National Science Foundation Grant DMS 2319372.
M.D. Kassabov was supported by 
National Science Foundation Grant DMS 2319371.
J.B. Wilson was supported by 
National Science Foundation Grant DMS 2319370.

\bibliographystyle{abbrv}
\begin{bibdiv}
\begin{biblist}

\bib{chatroom}{article}{
    author = {Acar, Evrim},
    author={\c{C}amtepe, Seyit A.},
    author={Krishnamoorthy, M.S.},
    author={Yener, B\"{u}lent},
    year={2005},
    title={Modeling and Multiway Analysis of Chatroom Tensors},
    booktitle={Intelligence and Security Informatics. ISI 2005. Lecture Notes in Computer Science},
    volume={3495},
    publisher={Springer},
    address={Berlin, Heidelberg},
}

\bib{chatroom2}{article}{
    author = {Acar, Evrim},
    author={\c{C}amtepe, Seyit A.},
    author={Yener, B\"{u}lent},
    title = {Collective Sampling and Analysis of High Order Tensors for Chatroom Communications},
    booktitle = {Proceedings of the 4th IEEE International Conference on Intelligence and Security Informatics},
    series = {ISI'06},
    year = {2006},
    pages = {213--224},
    publisher = {Springer-Verlag},
    address = {Berlin, Heidelberg},
}

\bib{Julia}{article}{
  title={Julia: A fresh approach to numerical computing},
  author={Bezanson, Jeff and Edelman, Alan and Karpinski, Stefan and Shah, Viral B},
  journal={SIAM review},
  volume={59},
  number={1},
  pages={65--98},
  year={2017},
  publisher={SIAM},
  url={https://doi.org/10.1137/141000671}
}

\bib{BC:hypergraphs}{article}{
   author={Brooksbank, Peter A.},
   author={Chaplin, Clara R.},
   title={New linear invariants of hypergraphs},
   note={arxiv:2512.03342}
   year={2025}
}

\bib{BW-tensor}{article}{
   author={Brooksbank, Peter A.},
   author={Wilson, James B.},
   title={Groups acting on tensor products},
   journal={J. Pure Appl. Algebra},
   volume={218},
   date={2014},
   number={3},
   pages={405--416},
   issn={0022-4049},
   review={\MR{3124207}},
   doi={10.1016/j.jpaa.2013.06.011},
}
     
\bib{Genus2}{misc}{
   author={Brooksbank, Peter A.},
   author={Maglione, Joshua},
   author={Wilson, James B.},
   title={A package for isomorphism testing of groups and tensors of tame genus},
   url={https://github.com/thetensor-space/TameGenus},
   year={2018},
   version={2.0},
}

\bib{BMW:Rihanna}{article}{
    author={Brooksbank, Peter A.},
    author={Maglione, Joshua},
    author={Wilson, James B.},
    title={Exact sequences of inner automorphisms of tensors},
    journal={J. Algebra},
    volume={545},
    date={2020},
    pages={43--63},
    issn={0021-8693},
    review={\MR{4044688}},
    doi={10.1016/j.jalgebra.2019.07.006},
}

\bib{OpenDleto}{webpage}{
    author={Brooksbank, P. A.},
    author={Kassabov, M. D.},
    author={Wilson, J. B.},
    title={OpenDleto},
    subtitle={Algorithms for structure recovery in tensors},
    url={https://github.com/thetensor-space/OpenDleto},
    date={}
}

\bib{CL:blockalgebra}{article}{
    author={Cai, Yunfeng},
    author={Liu, Chengyu},
    title={An algebraic approach to nonorthogonal general joint block diagonalization},
    journal={SIAM J. Matrix Anal. Appl.},
    volume={38},
    date={2017},
    number={1},
    pages={50--71},
    issn={0895-4798},
    review={\MR{3592077}},
    doi={10.1137/16M1080756},
}

\bib{CL:blocksnoise}{article}{
    author={Cai, Yunfeng},
    author={Li, Ren-Cang},
    title={Perturbation analysis for matrix joint block diagonalization},
    journal={Linear Algebra Appl.},
    volume={581},
    date={2019},
    pages={163--197},
    issn={0024-3795},
    review={\MR{3983068}},
    doi={10.1016/j.laa.2019.07.012},
}
    
\bib{CS:blocks}{article}{
    author={Cai, Yunfeng},
    author={Shi, Decai},
    author={Xu, Shufang},
    title={A matrix polynomial spectral approach for general joint block diagonalization},
    journal={SIAM J. Matrix Anal. Appl.},
    volume={36},
    date={2015},
    number={2},
    pages={839--863},
    issn={0895-4798},
    review={\MR{3357633}},
    doi={10.1137/130935264},
}    

\bib{Cardoso}{article}{
    author = {Cardoso, J.-F},
    year = {1991},  
    month = {05},
    pages = {3109 - 3112 vol.5},
    title = {Super-symmetric decomposition of the fourth-order cumulant tensor. Blind identification of more sources than sensors},
    volume = {5},
    isbn = {0-7803-0003-3},
    doi = {10.1109/ICASSP.1991.150113}
}

\bib{DeL:Blocks}{article}{
    author = {De Lathauwer, Lieven},
    title = {Decompositions of a Higher-Order Tensor in Block Terms—Part II: Definitions and Uniqueness},
    journal = {SIAM Journal on Matrix Analysis and Applications},
    volume = {30},
    number = {3},
    pages = {1033-1066},
    year = {2008},
    doi = {10.1137/070690729},
}

\bib{DeL:HOSVD}{article}{
    author = {De Lathauwer, Lieven},
    author = {De Moor, Bart},
    author = {Vandewalle, Joos},
    title = {A Multilinear Singular Value Decomposition},
    journal = {SIAM Journal on Matrix Analysis and Applications},
    volume = {21},
    number = {4},
    pages = {1253-1278},
    year = {2000},
    doi = {10.1137/S0895479896305696},
}

\bib{Eberly-Giesbrecht}{article}{
   author={Eberly, W.},
   author={Giesbrecht, M.},
   title={Efficient decomposition of associative algebras over finite
   fields},
   journal={J. Symbolic Comput.},
   volume={29},
   date={2000},
   number={3},
   pages={441--458},
   issn={0747-7171},
   review={\MR{1751390}},
   doi={10.1006/jsco.1999.0308},
}

\bib{FMW}{article}{
    title={A spectral theory for transverse tensor operators},
    author={First, Uriya},
    author={Maglione, Joshua},
    author={Wilson, James B.},
    note={arXiv:1911.02518}
    year={2020},
}

\bib{ITensor}{article}{
	title={The ITensor Software Library for Tensor Network Calculations},
	author={Fishman, Matthew}, 
    author={White, Steven R.},
    author={Stoudenmire, E. Miles},
	journal={SciPost Phys. Codebases},
	pages={4},
	year={2022},
	publisher={SciPost},
	doi={10.21468/SciPostPhysCodeb.4},
	url={https://scipost.org/10.21468/SciPostPhysCodeb.4}
}
     
\bib{Jac}{book}{
   author={Jacobson, Nathan},
   title={Lie algebras},
   note={Republication of the 1962 original},
   publisher={Dover Publications, Inc., New York},
   date={1979},
   pages={ix+331},
   isbn={0-486-63832-4},
   review={\MR{0559927}},
}

\bib{Jin-symmetric}{article}{
   author={Jin, Ruhui},
   author={Kileel, Joe},
   author={Kolda, Tamara G.},
   author={Ward, Rachel},
   title={Scalable symmetric tucker tensor decomposition},
   journal={SIAM J. Matrix Anal. Appl.},
   volume={45},
   date={2024},
   number={4},
   pages={1746--1781},
   issn={0895-4798},
   review={\MR{4804192}},
   doi={10.1137/23M1582928},
}

\bib{JLP-Border-Rank}{article}{
   author={Jelisiejew, Joachim},
   author={Landsberg, J. M.},
   author={Pal, Arpan},
   title={Concise tensors of minimal border rank},
   journal={Math. Ann.},
   volume={388},
   date={2024},
   number={3},
   pages={2473--2517},
   issn={0025-5831},
   review={\MR{4705743}},
   doi={10.1007/s00208-023-02569-y},
}

\bib{Bader-Kolda}{article}{
   author={Kolda, Tamara G.},
   author={Bader, Brett W.},
   title={Tensor decompositions and applications},
   journal={SIAM Rev.},
   volume={51},
   date={2009},
   number={3},
   pages={455--500},
   issn={0036-1445},
   review={\MR{2535056}},
   doi={10.1137/07070111X},
}

\bib{Lim-tensors}{article}{
   author={Lim, Lek-Heng},
   title={Tensors in computations},
   journal={Acta Numer.},
   volume={30},
   date={2021},
   pages={555--764},
   issn={0962-4929},
   review={\MR{4298222}},
   doi={10.1017/S0962492921000076},
}

\bib{MM:star-alge-blocks}{article}{
    author={Maehara, T.},
    author={Murota, K.},
    title={A numerical algorithm for block-diagonal decomposition of matrix $*$-algebras with general irreducible components.}
    journal={Japan J. Indust. Appl. Math.},
    number={27},
    pages={263–293},
    year={2010},
    doi={10.1007/s13160-010-0007-8}
}

\bib{Tucker}{article}{
    author={Tucker, L.R.},
    title={Some mathematical notes on three-mode factor analysis.},
    journal={Psychometrika},
    number={31},
    pages={279–311},
    year={1966},
    doi={10.1007/BF02289464},
}

\bib{Nick:Chisel}{article}{
      title={A chiseling algorithm for low-rank Grassmann decomposition of skew-symmetric tensors}, 
      author={Nick Vannieuwenhoven},
      year={2024},
      eprint={2410.14486},
      archivePrefix={arXiv},
      primaryClass={math.NA},
      url={https://arxiv.org/abs/2410.14486}, 
}

\bib{Wilson:direct-decomp}{article}{
    author={Wilson, James B.},
    title={Existence, algorithms, and asymptotics of direct product decompositions, I},
    journal={Groups Complex. Cryptol.},
    volume={4},
    date={2012},
    number={1},
    pages={33--72},
    issn={1867-1144},
    review={\MR{2921155}},
}

\bib{Wilson:unique}{article}{
   author={Wilson, James B.},
   title={Decomposing $p$-groups via Jordan algebras},
   journal={J. Algebra},
   volume={322},
   date={2009},
   number={8},
   pages={2642--2679},
   issn={0021-8693},
   review={\MR{2559855}},
}

\bib{Wilson:central}{article}{
   author={Wilson, James B.},
   title={Finding central decompositions of $p$-groups},
   journal={J. Group Theory},
   volume={12},
   date={2009},
   number={6},
   pages={813--830},
   issn={1433-5883},
   review={\MR{2582050}},
}
     
\end{biblist}
\end{bibdiv}

\end{document}